\numberwithin{equation}{section}
\numberwithin{figure}{section}
\newtheorem{theorem}{Theorem}[section]
\newtheorem{condition}[theorem]{Condition}
\newtheorem{corollary}[theorem]{Corollary}
\newtheorem{assumption}[theorem]{Assumption}
\newtheorem{lemma}[theorem]{Lemma}
\newtheorem{definition}[theorem]{Definition}
\newtheorem{example}[theorem]{Example}
\newtheorem{proposition}[theorem]{Proposition}
\newtheorem{remark}[theorem]{Remark}
\newcommand{\minimize}[2]{\ensuremath{\underset
  {\substack{{#1}}}%
{\text{\rm minimize}}\;\;#2 }}
\newcommand{\maximize}[2]{\ensuremath{\underset
  {\substack{{#1}}}%
{\text{\rm maximize}}\;\;#2 }}
\newcommand{\A}{ \mathcal{A}}
\newcommand{\OAP}{ \mathcal{O}}
\newcommand{\lev}[1]{{\ensuremath{{{{\operatorname{lev}}}_{\leq #1}}\,}}}
\newcommand{\X}{\mathbb{R}^n}
\newcommand{\R}{\mathbb{R}}
\begin{document}

\title{Outer Approximation Scheme for Weakly Convex Constrained Optimization Problems}


\author{E. Bednarczuk$^{1,2}$, G. Bruccola$^1$, J.-Ch. Pesquet$^3$, K. Rutkowski$^{1,4}$
}

\thanks{$^1$ Systems Research Institute of the Polish Academy of Sciences. }
\thanks{$^2$ Warsaw University of Technology.}
\thanks{$^3$ Centrale-
Supélec, CVN, Inria, University Paris-Saclay, 91190 Gif-sur-Yvette, France.}
\thanks{$^4$ Cardinal Stefan Wyszy\'nski University, 01-815 Warsaw, Dewajtis 5.	}


\maketitle

\begin{abstract}

Outer approximation methods have long been employed to tackle a variety of optimization problems, including linear programming, in the 1960s, and continue to be effective for solving variational inequalities, general convex problems, as well as mixed-integer linear, and nonlinear programming problems.

In this work, we introduce a novel outer approximation scheme specifically designed for solving weakly convex constrained optimization problems. The key idea lies in utilizing quadratic cuts, rather than the traditional linear cuts, and solving an outer approximation problem at each iteration in the form of a Quadratically Constrained Quadratic Programming (QCQP) problem.

The primary result demonstrated in this work is that every convergent subsequence generated by the proposed outer approximation scheme converges to a global minimizer of the general weakly convex optimization problem under consideration. To enhance the practical implementation of this method, we also propose two variants of the algorithm.

The efficacy of our approach is illustrated through its application to two distinct problems: the circular packing problem and the Neyman-Pearson classification problem, both of which are reformulated within the framework of weakly convex constrained optimization.

\keywords{Key words: Weak convexity, QCQP, Outer approximation, Quadratic cuts}

\end{abstract}

\section{Introduction}

We consider the following constrained optimization problem:
\begin{equation}
    \label{prob: P0}
    \tag{P0}
    \minimize{x\in \mathbb{R}^{n-1}}\,F(x)\quad \text{s.t. }\quad g_i(x)\le0,\ \ i\in \{2,\ldots,m\},
\end{equation}
where $n\in \mathbb{N}\setminus \{0,1\}$, $F\colon \mathbb{R}^{n-1} \to (-\infty, +\infty]$ is a proper lower-semicontinuous (lsc) $\beta_1$-weakly convex function with $\beta_1\in \mathbb{R}_+$ and, for every
$i\in \{2,\ldots,m\}$, $g_i\colon \mathbb{R}^{n-1} \to (-\infty, +\infty]$ is a proper lsc $\beta_i$-weakly convex function with
$\beta_i\in \mathbb{R}_+$.
This problem can be reformulated as
\begin{equation}
    \label{P_min}
    \minimize{x\in S}\,F(x)
\end{equation}
where $S = \bigcap\limits_{i=2}^m \text{lev}_{\le0}\,g_i$ 
and, for every $i\in \{2,\ldots,m\}$, $\text{lev}_{\le 0}\,g_i = \{x\in \mathbb{R}^{n-1}\mid g_i(x)\le 0\}$ is the lower-level set of $g_i$ at height 0.

Problem \eqref{P_min} defines a superclass of convex constrained optimization problems and many optimization problems can be cast in the form of  \eqref{prob: P0}. 

Let us denote the inner product of the space $\mathbb{R}^n$ by $\langle\cdot,\cdot\rangle$  and let $\|\cdot\|$ be the corresponding norm.
We show (see Theorem \ref{th: minnoncon} below) that, under mild assumptions, \eqref{prob: P0} can be rewritten as follows:
\begin{equation}
    \label{problem}
    \tag{P}
    \minimize{x\in \A} \ \ J(x)=\frac{1}{2}\|x-z\|^2,
\end{equation}
where $z\in\mathbb{R}^n$ and
\begin{equation}\label{e:defA}
\A:=\{x\in\mathbb{R}^{n}\mid (\forall i \in I= \{1,\ldots, m\})\, f_{i}(x)\le 0\},
\end{equation}
for every $i\in I$, function $f_{i}\colon\mathbb{R}^n\rightarrow(-\infty, +\infty]$ is proper, lsc, and $\rho_i$-weakly convex with $\rho_i \in \mathbb{R}_+$. Throughout the paper, we will analyze problem \eqref{problem}.

Unconstrained minimization problems with weakly convex objective have been investigated  within different frameworks, {\em e.g. by 
\cite{jourani1996subdifferentiability, pallaschke2013foundations, rubinov2013abstract, singer1997abstract}, }
and a number of splitting proximal algorithms have been proposed {\em e.g.} in \cite{Atenas2023Unified,bohm2021variable,khanh2023inexactproximalmethodsweakly}. 

Let $J^*$ be the optimal value of \eqref{problem}, which is finite provided that $\A$ is nonempty.
If, for every $i\in I$, $f_i$ is continuous on a lower level set 
\begin{equation}
\lev{\alpha}J =\{x\in\mathbb{R}^n \mid J(x)\le \alpha\},
\end{equation}
and the height $\alpha> J^*$, then we will show that \eqref{problem} can be solved by a new outer approximation method called \textit{Cutting spheres method}.
The idea of the Cutting spheres method is to construct a sequence of elements which approximate the solution to problem \eqref{problem} by relaxing the constraints in a suitable iterative manner.

At each iteration $k$, this method solves a quadratically  constrained (possibly nonconvex)  optimization problem, which is an outer approximation to the original problem \eqref{problem}, in the sense that
set $\mathcal{A}$ is replaced by a superset $\OAP_k$, i.e. 
$$ \A\subset \OAP_k,$$
where $\OAP_k$ is the solution set of a finite number of possibly nonconvex quadratic inequalities.
\subsection{Related works}

Consider Problem \ref{P_min}
where $F\in \Gamma_0(\mathbb{R}^n)$ and $S\subset \mathbb{R}^n$ is convex.
Outer approximation methods for convex optimization, see e.g. \cite{combettes2000strong,combettes2004image,drori2016optimal}, generalize the Cutting plane method introduced in \cite{kelley1960cutting}. 
The main idea of the Cutting plane method is to generate, at each iteration $k$, hyperplanes which separate the current iteration $x_k$ from the feasible set 
$S$.
In this way, the halfspaces generate a polyhedral outer approximation set of $C$ , which does not contain the current iteration $x_k$. 
By minimizing the objective function $F$
over the outer approximation set, we find the new iterate and we move forward in the direction of 
$S$.
We can say that the hyperplanes \textit{cut} the current iteration $x_k$.

It is important to notice the connection between the Cutting plane method and monotone operator theory. A \textit{cutter} is an operator 
$T:\X\rightarrow\X$ 
such that
\begin{equation}
    \label{def: cutter}
    (\forall\,x\in \X)\quad
    \text{Fix }T \subseteq h(x,Tx):=\{ u\in \X\ |\ \langle u-Tx, x-Tx\rangle \le 0\},
\end{equation}
where $\text{Fix }T:=\{x\in \X\ |\ Tx=x  \}$. Then, for a given $k\in \mathbb{N}$, the bounding hyperplane of $h(x_k,Tx_k)$ \textit{cuts} the space in two half-spaces, one which contains $\text{Fix }T$ and the other which contains $x_k\not\in \text{Fix }T$.
The name cutter operators was suggested by \cite{cegielski2011opial}, but the class of operators satisfying \eqref{def: cutter} was introduced, ante litteram, by \cite{combettes2001quasi} and also studied, for example, in \cite{censor2009split,zaknoon2003algorithmic} under different names.
The goal of the papers cited above is to propose an algorithm that, given a sequence of cutter operators $(T_k)_{k\in \mathbb{N}}$, where $(\forall k\in \mathbb{N})$ $T_k:\X\rightarrow\X$, converges to a common fixed point. 

Finally, also bundle methods, e.g. \cite{kiwiel1990proximity, lukvsan1999globally},  can be seen as generalizations of the Cutting plane algorithm proposed in \cite{kelley1960cutting} for convex problems. 

\subsection{Outer approximation in nonconvex settings and contribution}

Several outer approximation methods deal with nonconvex constrained optimization problems.  For example,  \cite{bienstock2019intersection,tuy1988global,yamada2010outer}, minimize the objective over the intersection $C\cap S$ of a convex set $C$ and a non convex set $S$ with a specific structure. The core idea of these three papers is to move in the direction of $C$ with a classic convex outer approximation method. In order to converge to a point in $C\cap S$, \textit{ad hoc} techniques are employed. 

On the other hand, the core idea of our method differs from the other outer approximation methods found in the literature, and it is sketched in the following.  Within our settings it does not  always exist a hyperplane separating the feasible set $\A$ of \eqref{problem} to a point $x\in \mathbb{R}^n\setminus\A $. 

We thus replace hyperplanes with spheres by using the tools provided by abstract convexity. 
In the setting of abstract convexity (which include weak convexity) for unconstrained optimization,  a generalized Cutting plane method can be found in \cite{pallaschke2013foundations, rubinov2013abstract}, where successive underestimations of the abstract $\Phi$-convex objective function are performed by using the $\Phi$-subgradient inequality, see \eqref{subgradient}, instead of the convex subgradient. 


 Based on $\Phi$-subgradients to the weakly convex functions $(f_i)_{1\le i\le m}$ describing the constraint set $\A$ of Problem \eqref{problem}, we construct an outer approximation to $\A$ in order to generate a new outer approximation problem at each iteration.

Geometrically, the resulting outer approximation to $\A$, is  the complement of the union of open balls.
The interiors of the balls the current iteration belongs to will be cut out, while $\A$ lies in the complement of the union of the ball interiors.


At each iteration, we solve a quadratic  polynomial  optimization problem. We call our method \textit{Cutting spheres algorithm} \ref{alg: alg1}.
Our main result is that 
every sequence generated by the Cutting spheres algorithm \ref{alg: alg1} contains a convergent subsequence and every
 converging subsequence converges to the global optimal solution of \eqref{problem}, see Theorem \ref{th: convergence}.

The drawback of \textit{Cutting spheres algorithm} \ref{alg: alg1} is that the number of constraints of the outer approximation problem increases with the number  of iterations.
Moreover, the outer approximation problem that we solve at every iteration is NP-Hard.
This is the reason why we propose another algorithm, \textit{Cutting spheres algorithm with warm restarts} \ref{alg: warm_rest}, which introduces complexity bound 
at the expense of weakening some convergence properties of Cutting spheres algorithm \ref{alg: alg1}.
 Finally, we propose a variant of the  \textit{Cutting spheres algorithm with warm restarts} \ref{alg: warm_rest}, called  \textit{Inexact cutting spheres algorithm} \ref{alg: inexact}, which possess  some computational advantages related to constraint linearization.


Up to our knowledge, Cutting spheres algorithm \ref{alg: alg1} is the first attempt to globally solve Problem \eqref{prob: P0} with convergence guarantees.
Studies involving weakly convex functions have appeared in various optimization
problems 
in a rapidly growing number, see \textit{e.g.}\ \cite{Atenas2023Unified,bohm2021variable, davis2019stochastic,  zhang2019fundamental}.  Moreover, weakly convex problems also have various applications. They occur, for example, in signal processing \cite{liu2019doa}, image processing  \cite{bayram2015convergence,mollenhoff2015primal,nikolova1998estimation,selesnick2020non}, and machine learning \cite{rafique2022weakly}.


\subsection{Motivation}

    Our motivation is to provide a convergent algorithm for finding global solutions of the constrained optimization problem \eqref{prob: P0}, which makes a direct use of weak convexity of the functions defining the feasible set $\mathcal{A}$.

{\color{black} In contrast to the convex case, a conceivable  passage from the constrained problems \eqref{prob: P0} and \eqref{problem} to unconstrained ones  by adding the indicator function of the set ${\A}$ to the objective $F$ is not easily achievable. This is  mainly due to the fact that the relationships between the weak convexity of a function, the weak convexity of its level sets and/or of the indicator function of ${\A}$  are still an active research topic,  and different concepts of a weakly convex set exist (see \cite{BALASHOV2010113,8955328}). 

Weakly convex constrained problems can be approached by quadratic regularization techniques as described in \cite{boob2023stochastic,ma2020quadratically}
where the authors solve a sequence of strongly convex problems. However, these papers provide only convergence to nearly stationary and $\varepsilon$-feasible solutions.

}

\subsection{Organization of the paper}
The rest of this paper is organized as follows
\begin{itemize}
\item[Section 2] We reformulate Problem \eqref{prob: P0} as \eqref{problem}.
    \item[Section 3]
    We define the outer approximation sets for feasible set $\mathcal{A}$. 
\item[Section 4] We introduce Cutting spheres algorithm \ref{alg: alg1} and investigate its basic properties. 
\item[Section 5]   We provide a convergence theorem for the Cutting spheres algorithm \ref{alg: alg1} (Theorem \ref{th: convergence}). We prove that there exists a converging subsequence and every cluster point of the sequence is a global solution to \eqref{problem}.
\item[Section 6] We introduce the Cutting spheres  with warm restarts algorithm \ref{alg: warm_rest}. This new algorithm addresses the problem of controlling the number of constraints at every iteration.
\item[Section 7] We discuss the quadratic approximation subproblem \eqref{outerproblem}.
\item[Section 8] We discuss the Inexact cutting spheres algorithm \ref{alg: inexact}, and we provide a convergence proof to a global $\varepsilon$-solution.
\item[Section 9] We present the numerical experiment related to the packing problem.
\item[Section 10]  We present the numerical experiment related to multiclass Neyman-Pearson classification.
\end{itemize}

\section{ Reformulating Problem (\texttt{P0}) as (\texttt{P})}
\label{chap: reformulation}
Given two sets $\mathcal{K}$ and $\mathcal{L}$, $|\mathcal{K}|$ denotes the cardinality of $\mathcal{K}$, while $\mathcal{K}\setminus \mathcal{L}$ denotes the set
$
\{k\in \mathcal{K}\mid k\not\in \mathcal{L}\}.
$
Recall that a function $f\colon \mathbb{R}^n\rightarrow ]-\infty, +\infty]$ is $\rho$-weakly convex with $\rho \in \mathbb{R}_+$ if $f+\frac{\rho}{2}\|\cdot\|^2$ is convex.

\begin{theorem}
\label{th: minnoncon}
Assume that Problem \eqref{P_min} has a solution. Let $(\rho,\eta)\in]0,+\infty[ \times \mathbb{R}$ be such that
\begin{equation}
    \label{set: D}
    D = \left\{\widehat{x}\in  \operatorname{Argmin}_{x\in S}F(x) \mid F(\widehat{x}) + \eta \ge \frac{\rho}{2}\|\widehat{x}\|^2\right\}\neq\varnothing. 
\end{equation}
For every $i \in \{1,\ldots, m\}$ and $y = (\overline{y}, y_{n}) \in \mathbb{R}^{n}$, let
\begin{equation}
  \widetilde{f}_i(y)=
  \begin{cases}
  F(\overline{y}) + \eta - \frac{\rho}{2}\|y\|^2 & \text{ if }i=1   \\
  g_i(\overline{y}) &\text{ otherwise.}
  \end{cases}
\end{equation}
Then $\widehat{x} \in D$ if and only if there exists
$\widehat{y}_{n}\in \mathbb{R}$ such that
$\widehat{y} = (\widehat{x}, \widehat{y}_{n})$ is a solution to
\begin{equation}
\label{problem2}
   \minimize{y\in\mathbb{R}^{n}}\,\|y\|^2\quad \text{s.t.}\quad
   (\forall i \in \{1,\ldots,m\})\;
   \widetilde{f}_i(y)\le 0.
\end{equation}
\end{theorem}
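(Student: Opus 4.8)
The plan is to decouple the two kinds of constraints in \eqref{problem2} and then dispose of the auxiliary coordinate $y_n$ by hand. First I would observe that for $i\in\{2,\dots,m\}$ the constraint $\widetilde f_i(y)\le 0$ reads $g_i(\overline y)\le 0$ and hence depends only on $\overline y$; taken together, these $m-1$ constraints are equivalent to $\overline y\in S$. The single remaining constraint $\widetilde f_1(y)\le 0$ is $F(\overline y)+\eta\le \tfrac{\rho}{2}\|y\|^2$, which is what couples $\overline y$, $y_n$ and the objective. Writing $F^\star:=\min_{x\in S}F(x)$ (finite, since \eqref{P_min} is solvable), the whole argument rests on one inequality obtained by chaining these two facts: for every feasible $y=(\overline y,y_n)$ one has $\tfrac{\rho}{2}\|y\|^2\ge F(\overline y)+\eta\ge F^\star+\eta$ (using $\overline y\in S$), i.e. $\|y\|^2\ge \tfrac{2}{\rho}(F^\star+\eta)$ since $\rho>0$. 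Thus $\tfrac{2}{\rho}(F^\star+\eta)$ is a lower bound for the value of \eqref{problem2}.

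For the direction $\widehat x\in D\Rightarrow$ solvability, I would build the missing coordinate explicitly. Since $\widehat x\in D$ satisfies $F(\widehat x)+\eta\ge\tfrac{\rho}{2}\|\widehat x\|^2$, the quantity $\tfrac{2}{\rho}(F(\widehat x)+\eta)-\|\widehat x\|^2$ is nonnegative; this is exactly where the defining inequality of $D$ enters, and it is what makes the construction possible. I may therefore set $\widehat y_n:=\bigl(\tfrac{2}{\rho}(F(\widehat x)+\eta)-\|\widehat x\|^2\bigr)^{1/2}$ and $\widehat y=(\widehat x,\widehat y_n)$. A direct substitution gives $\widetilde f_1(\widehat y)=0$, while $\widehat x\in S$ yields $\widetilde f_i(\widehat y)\le 0$ for $i\ge 2$, so $\widehat y$ is feasible with $\|\widehat y\|^2=\tfrac{2}{\rho}(F(\widehat x)+\eta)=\tfrac{2}{\rho}(F^\star+\eta)$. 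Comparing with the lower bound above shows $\widehat y$ attains it, hence solves \eqref{problem2}.

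For the converse, let $\widehat y=(\widehat x,\widehat y_n)$ be any solution of \eqref{problem2}. The constraints for $i\ge2$ give $\widehat x\in S$, and since $D\neq\varnothing$ the previous paragraph has already identified the optimal value of \eqref{problem2} as $\tfrac{2}{\rho}(F^\star+\eta)$, so $\|\widehat y\|^2=\tfrac{2}{\rho}(F^\star+\eta)$. Feeding this back into the chained inequality forces equality throughout, whence $\tfrac{\rho}{2}\|\widehat y\|^2=F(\widehat x)+\eta$ and $F(\widehat x)=F^\star$, i.e. $\widehat x\in\operatorname{Argmin}_{x\in S}F(x)$. The membership inequality then comes for free, since $F(\widehat x)+\eta=\tfrac{\rho}{2}\|\widehat y\|^2=\tfrac{\rho}{2}\bigl(\|\widehat x\|^2+\widehat y_n^2\bigr)\ge\tfrac{\rho}{2}\|\widehat x\|^2$; together these give $\widehat x\in D$.

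The computations are routine; the only real subtlety — and the place I would be most careful — is the bookkeeping that ties the two directions together. One must invoke $D\neq\varnothing$ to fix the optimal value of \eqref{problem2} \emph{before} the converse can conclude, and one must notice that the defining inequality of $D$ is precisely the nonnegativity condition guaranteeing $\widehat y_n$ is real (so that $F^\star+\eta\ge 0$ and no contradiction with $\|\widehat y\|^2\ge 0$ arises). I would also check the edge behaviour where $F(\overline y)=+\infty$ — then $\widetilde f_1(y)=+\infty$ and such $y$ are infeasible, consistent with their playing no role — and record explicitly that $\rho>0$ is used when dividing the constraint $F(\overline y)+\eta\le\tfrac{\rho}{2}\|y\|^2$ by $\rho$.
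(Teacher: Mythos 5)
Your proof is correct, but it is structured differently from the paper's. The paper argues by a chain of reformulations: it first rewrites \eqref{P_min} with a slack variable as minimizing $x_n$ subject to $F(x)+\eta\le\frac{\rho}{2}x_n$ and $g_i(x)\le 0$, observes that any feasible point with $x\in D$ automatically satisfies the extra constraint $\|x\|^2\le x_n$, and then performs the change of variables $x=\overline{y}$, $x_n=\|\overline{y}\|^2+y_n^2$ to arrive at \eqref{problem2}; the correspondence between solution sets is asserted rather than checked. You instead work directly on \eqref{problem2}: you identify its optimal value as $\frac{2}{\rho}(F^\star+\eta)$, where $F^\star$ is the optimal value of \eqref{P_min} (lower bound from chaining the two constraint families, attainment by the explicit point with $\widehat{y}_n=\bigl(\frac{2}{\rho}(F(\widehat{x})+\eta)-\|\widehat{x}\|^2\bigr)^{1/2}$, whose existence is exactly where $D\neq\varnothing$ and the defining inequality of $D$ enter), and you obtain the converse by forcing equality in that chain. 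Your $\widehat{y}_n$ is precisely the inverse of the paper's change of variables, so the key construction is shared; what your version buys is an explicit verification of both directions of the ``if and only if'' --- in particular the equality-forcing step showing that any solution of \eqref{problem2} has its first block in $\operatorname{Argmin}_{x\in S}F$ and satisfies $F(\widehat{x})+\eta\ge\frac{\rho}{2}\|\widehat{x}\|^2$ --- details the paper's terse reformulation argument leaves implicit. What the paper's route buys is brevity and a derivation that explains how \eqref{problem2} is discovered in the first place.
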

\begin{proof}
By introducing a slack variable, Problem \eqref{P_min} is equivalent to
\begin{equation}
\label{mincon0.6}
\minimize{x\in\mathbb{R}^{n-1}, x_{n}\in\,\mathbb{R}}\,x_{n}\
\quad \text{s.t. }\quad
\begin{cases}
F(x)+\eta\le\frac{\rho}{2}x_{n},\\
(\forall i \in \{2,\ldots,m\})\; 
g_i(x)\le0.
\end{cases}
\end{equation}
Since \eqref{set: D} holds, the feasible solutions $(x,x_n)$ to \eqref{mincon0.6}, where $x\in D$, satisfy the additional constraint
\begin{equation}
    \label{mincon0.7}
    \|x\|^2\le x_{n}.
\end{equation}
This shows that we can make the change of variables
\begin{align}
    &x = \overline{y}\nonumber\\
    &x_{n} = \|\overline{y}\|^2 + y^2_{n}\nonumber\\
    &y=(\overline{y},y_{n}), 
\end{align}
so leading to Problem \eqref{problem2}.
\end{proof}
In view of Theorem \ref{th: minnoncon}, in the sequel we focus on solving only problem \eqref{problem}.

\begin{remark}
Notice that \eqref{problem2} is equivalent to \eqref{problem}.
If $F$ and $(g_i)_{2\le i\le m}$ are weakly convex (e.g., twice differentiable functions with bounded Hessian), then the functions $(\widetilde{f}_i)_{1\le i\le m}$ are also weakly convex. However, if Problem \eqref{prob: P0}
is convex (i.e., $\beta_1 = \ldots = \beta_m = 0$), Problem \eqref{problem2} has a weakly convex constraint (namely,  $\widetilde{f}_1(y)\le0$).
\end{remark} 

\begin{remark}
 If $F$ is $\beta_1$-weakly convex and, for every $i\in\{2,\ldots,m\}$, $g_i$
is $\beta_i$-weakly convex, then there exists convex functions $H$ and $(h_i)_{1\le i\le m}$ such that, for every $x\in\mathbb{R}^n$,
\begin{equation}
    \label{mincon_weakconv}
    \begin{aligned}
    F(x)=H(x)-\frac{\beta_1}{2}\|x\|^2\\
    (\forall\,i\in\{2,\ldots,m\})\ \ g_i(x)=h_i(x)-\frac{\beta_i}{2}\|x\|^2.
    \end{aligned}
\end{equation}
\end{remark}

Therefore, Problem \eqref{problem} can be rewritten as
\begin{equation}
    \minimize{y=(\overline{y},y_{n})\in\mathbb{R}^{n},\sigma\in[0,+\infty[}\sigma\quad \text{s.t. }\quad \left\{
    \begin{aligned}
        &H(\overline{y})+\frac{\beta_1}{2} y_{n}^2+\eta\le \frac{\rho+\beta_1}{2}\sigma\\
        &(\forall\,i\in\{2,\ldots,m\})\ \ h_i(\overline{y})+\frac{\beta_i}{2}y^2_{n}\le \frac{\beta_i}{2}\sigma\\
        & \sigma=\|y\|^2.
        \end{aligned}
\right.  
\end{equation}
We deduce that a convex relaxation of Problem \eqref{problem} is given by
\begin{equation}
\label{mincon_0.14}
    \minimize{y\in\mathbb{R}^{n},\sigma\in[0,+\infty[}\sigma\quad \text{s.t. }\quad \left\{
    \begin{aligned}
        &H(\overline{y})+\frac{\beta_1}{2} y_{n}^2+\eta\le \frac{\rho+\beta_1}{2}\sigma\\
        &(\forall\,i\in\{2,\ldots,m\})\ \ h_i(\overline{y})+\frac{\beta_i}{2}y^2_{n}\le \frac{\beta_i}{2}\sigma\\
        & \|y\|^2\le \sigma.
        \end{aligned}
\right.  
\end{equation}
If $(\widehat{x},\widehat{y}_{n},\widehat{\sigma})$ is a solution to Problem \eqref{mincon_0.14}, then $(\widehat{x},0,\widehat{\sigma})$ is a solution too. In turn, in the convex case, the pairs of the form $(\widehat{x}, 0)$ are the solutions to Problem \eqref{mincon0.6}-\eqref{mincon0.7}.
(set $x_{n} = \sigma$). So, when $F$ and $(g_i)_{2\le i\le m}$ are convex, the relaxation \eqref{mincon_0.14} is exact (in sense of having the same solutions).

\section{About the constraint set}
\subsection{Weakly convex constraint functions}
\label{subsec: Weakly convex constraint functions}

We define $\text{dom }f_i:=\{x\in \mathbb{R}^n\ |\ f_i(x)<+\infty\}$. We say that the function $f:\ \mathbb{R}^n\rightarrow (-\infty,+\infty]$ is \textit{proper} when $\text{dom }f\neq \varnothing$.

Consider the constraint set $\A$ in \eqref{problem}  and recall that, for every $i\in I$,
function $f_{i}\colon\mathbb{R}^n \to (-\infty,+\infty]$ has been assumed  proper,
lower semicontinuous, and $\rho_i$-weakly convex on $\mathbb{R}^n$, i.e.,  for each function $f_i$ there exists a constant $\rho_i\ge 0$ such that function 
$$ \check{f}_i=f_i+\frac{\rho_i}{2}\|\cdot\|^2$$
is convex. 
Functions $(f_{i})_{i\in I}$ can be represented (see e.g. \cite[Proposition 1.1.3(f)]{cannarsa2004semiconcave})  as the pointwise supremum of functions from the class 
$$
\Phi^i_{\rm lsc}=\{\varphi\colon \mathbb{R}^n\rightarrow\mathbb{R}\mid
(\forall x \in \mathbb{R}^n)\,\varphi(x)=-a\|x\|^{2}+\langle b,x\rangle+\Tilde{c}, \ a\ge\frac{\rho_i}{2}, b\in\mathbb{R}^{n}, \Tilde{c}\in\mathbb{R}\},
$$
i.e., for every $i\in I$ and $x\in \mathbb{R}^n$,
\begin{equation}\label{e:conja}
f_{i}(x)=\sup_{\varphi\in \Phi^i_{\rm lsc}} \{\varphi(x)\mid \varphi\le f_{i}\},
\end{equation}
where $\varphi$ are functions in $\Phi^i_{\rm lsc}$ minorizing $f_{i}$ ($\varphi\le f_{i}$ means that, for every $x\in \mathbb{R}^n$, $\varphi(x)\le f_{i}(x)$).

The notation $\Phi_{\rm lsc}$ comes from the fact \cite[Proposition 6.3]{rubinov2013abstract}
that the class of lower-semicontinuous functions minorized by an element in  $\Phi_{\rm lsc}$ coincides with the class of $\Phi_{\rm lsc}$-convex functions.  
    We say the function $f_i$, $i\in I$ 
is $\Phi^i_{\rm lsc}$-convex (or abstract convex with respect to $\Phi^i_{\rm lsc}$) if \eqref{e:conja} holds.

We will also define the set
$$
\Phi_{\rm lsc}^{i,0}=\{\varphi\in \Phi_{\rm lsc}^{i}\mid
\varphi(0) = 0\}.
$$


Let $\widetilde{x}\in \text{dom }f_i$.
An element $\varphi\in \Phi_{\rm lsc}^{i,0}$ 
is an {\em abstract subgradient or $\Phi_{lsc}^i$-subgradient} of the function $f_{i}$ at the point $\widetilde{x}$ if
\begin{equation} 
\label{subgradient}
(\forall x \in \mathbb{R}^n)\quad
f_{i}(x)-f_{i}(\widetilde{x})\ge\varphi(x)-\varphi(\widetilde{x}).
\end{equation}
We will then write $\varphi\in\partial_{\rm lsc}f_{i}(\widetilde{x})$.
Note that we can rewrite \eqref{subgradient} as 
\begin{equation}
    \label{convex subgradient}
    \begin{aligned}
    (\forall x \in \mathbb{R}^n)\quad&\Tilde{f}_i(x)- \Tilde{f}_i(\widetilde{x})\ge \langle b_i, x-\widetilde{x}\rangle
    \end{aligned}
\end{equation}
and $b_i\in\partial \Tilde{f}_i(\widetilde{x})$, where $\partial\Tilde{f}_i(\widetilde{x})$ is the (Moreau) subdifferential in the sense of convex analysis of $\Tilde{f}_i=f_i+a_i\|\cdot\|^2$ at $\widetilde{x}$ with $a_i\ge \rho_i/2$, see e.g., \cite[Definition 16.1]{bauschke2017convex}.
This can be summarized by the following property.
\begin{lemma}
\label{lem: quad form constr}
For every $i\in I$, let $f_i$ be a proper lsc and $\frac{\rho_i}{2}$-weakly convex function
and let $\widetilde{x} \in \text{dom }f_i$. 
If $\partial_{\rm lsc}f_i(\widetilde{x})\neq \varnothing$ and $\varphi\in \partial_{\rm lsc}f_i(\widetilde{x})$, then 
$f_i$ 
has a quadratic tangent minorant $L_i(\varphi,\widetilde{x})$, 
that is
\begin{equation}
    \label{quad form constr}
    (\forall x \in \mathbb{R}^n)\quad f_i(x) \geq 
    L_i(\varphi,\widetilde{x})(x) =\varphi(x)+c_i(\widetilde{x}),
\end{equation}
where $c_i(\widetilde{x}) = f_i(\widetilde{x})-\varphi(\widetilde{x})$.
\end{lemma}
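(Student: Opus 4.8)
The plan is to derive the claimed minorant directly from the defining inequality of the abstract subgradient, since the conclusion is essentially a rearrangement of \eqref{subgradient} together with an identification of the constant term. I expect no genuine technical obstacle here: the $\rho_i$-weak convexity and the $\Phi^i_{\rm lsc}$-convexity enter only to guarantee that $\partial_{\rm lsc}f_i(\widetilde{x})$ may be nonempty, but in the derivation itself only the assumed membership $\varphi\in\partial_{\rm lsc}f_i(\widetilde{x})$ is used.

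First I would recall that, by hypothesis, $\varphi\in\partial_{\rm lsc}f_i(\widetilde{x})$, so \eqref{subgradient} holds: for every $x\in\mathbb{R}^n$,
\[
f_i(x)-f_i(\widetilde{x})\ge\varphi(x)-\varphi(\widetilde{x}).
\]
Moving the two terms evaluated at $\widetilde{x}$ to the right-hand side gives
\[
f_i(x)\ge\varphi(x)+\bigl(f_i(\widetilde{x})-\varphi(\widetilde{x})\bigr),
\]
and setting $c_i(\widetilde{x})=f_i(\widetilde{x})-\varphi(\widetilde{x})$ yields exactly the asserted inequality $f_i(x)\ge L_i(\varphi,\widetilde{x})(x)$ of \eqref{quad form constr}. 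Here $c_i(\widetilde{x})$ is well defined and finite because $\widetilde{x}\in\text{dom}\,f_i$ ensures $f_i(\widetilde{x})<+\infty$, while $\varphi(\widetilde{x})\in\mathbb{R}$ since $\varphi$ is real-valued.

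Finally I would justify the two adjectives in \emph{quadratic tangent minorant}. The quadratic character follows because $\varphi$ belongs to $\Phi^{i,0}_{\rm lsc}$, hence is of the form $\varphi(x)=-a\|x\|^2+\langle b,x\rangle$ with $a\ge\rho_i/2$; adding the constant $c_i(\widetilde{x})$ keeps $L_i(\varphi,\widetilde{x})$ a (concave) quadratic in $x$. The tangency at $\widetilde{x}$ follows by evaluating at $x=\widetilde{x}$, namely $L_i(\varphi,\widetilde{x})(\widetilde{x})=\varphi(\widetilde{x})+c_i(\widetilde{x})=f_i(\widetilde{x})$, so the minorant touches $f_i$ at $\widetilde{x}$. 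The only point deserving a moment of care is the bookkeeping at $\widetilde{x}$: confirming that the normalization $\varphi(0)=0$ built into $\Phi^{i,0}_{\rm lsc}$ is consistent with this quadratic form and that it does not interfere with the tangency identity just verified.
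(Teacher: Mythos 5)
Your proof is correct and follows exactly the route the paper intends: the lemma is presented there as a direct ``summary'' of the abstract subgradient inequality \eqref{subgradient}, obtained by the same rearrangement you perform, with $c_i(\widetilde{x}) = f_i(\widetilde{x})-\varphi(\widetilde{x})$. Your additional checks (finiteness of $c_i(\widetilde{x})$, the quadratic form of $\varphi\in\Phi^{i,0}_{\rm lsc}$, and the tangency identity $L_i(\varphi,\widetilde{x})(\widetilde{x})=f_i(\widetilde{x})$) are consistent with, and slightly more explicit than, the paper's treatment.
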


\subsection{Outer approximation to set $\A$}

 For every $x\in \mathbb{R}^n$, let
\begin{equation}\label{e:defIxell}
    I(x) = \{i \in I \mid f_{i}(x)>0\}
\end{equation}
be the index set of violated constraints at $x$.
 \begin{definition}
 \label{def: out approx}
Let $k\in \mathbb{N}$ and $(x_0,x_1,\dots,x_k)\in (\mathbb{R}^n)^k$, be such that, for every $\ell \in \{0,\ldots,k\}$,
$I(x_\ell)\neq \varnothing$ and, for every $i\in I(x_\ell)$, there exists $\varphi_i^\ell\in 
\partial_{\rm lsc}f_i(x_\ell)$.
The {\em outer approximation set} $\mathcal{O}_k$ is defined as
\begin{equation}
    \mathcal{O}_k := \{ x \in \mathbb{R}^n \mid 
    (\forall \ell \in \{0,\ldots,k\})(\forall i \in I(x_\ell))\;
    L_i(\varphi_i^\ell, x_\ell)(x) \le 0\}.
\end{equation}
We will refer to the constraint $L_i(\varphi_i^k,x_k)(\cdot)\le 0$ as a Quadratic cut.
 \end{definition}
 Note that $\mathcal{O}_k$ with $k\ge 1$ defined above can be built recursively as
 \begin{equation}\label{e:recurOk}
 \mathcal{O}_k = 
 \{ x \in \mathbb{R}^n \mid 
    (\forall i \in I(x_k)) 
    L_i(\varphi_i^k, x_k)(x) \le 0\} \cap \mathcal{O}_{k-1}.
\end{equation}
The outer approximation terminology is justified by the following result which
is  a straightforward consequence of Lemma \ref{lem: quad form constr}.
\begin{lemma}
\label{lem: out set}
Consider the feasible set $\mathcal{A}$ of Problem  \eqref{problem}. 
Under the assumptions of Definition \ref{def: out approx}, the following properties are satisfied.

\begin{enumerate}
 \item\label{lem: out seti} For every $i\in  I(x_k)$, $f_i(x_k)=L{_i}(\varphi^k_i,x_{k})(x_k)>0$.
\item\label{lem: out setii} $\mathcal{A}\subset \mathcal{O}_k$.
\end{enumerate}
\end{lemma}

\subsection{Geometric interpretation of the outer approximation constraints set $\mathcal{O}_k$
}
\label{subsec: geometric}

The following proposition provides a geometric interpretation of the Quadratic cut $L{_j}(\varphi^k_j,x_{k})(\cdot)\le 0$, $j\in I(x_k)$, $k\in \mathbb{N}$.

\begin{lemma}
\label{lem: geo_sphere}
Under the assumptions of Definition \ref{def: out approx},
given an index $j\in I(x_k)$,
the constraint
$$
L_j(\varphi_{j}^{k}, x_{k})(\cdot)\le 0
$$
with $\varphi_{j}^{k}\colon x \mapsto -a_{j} \|x\|^2+\langle b_{j,k}, x \rangle$,
$a_j > 0$, and $b_{j,k}\in \mathbb{R}$,
describes the complement of a ball centered at $b_{j,k}/(2a_j)$ with radius 
\begin{equation}
    \label{eq:sphere_radius}
    r_k=\sqrt{\left\|x_k-\frac{b_{j,k}}{2a_j}\right\|^2+\frac{f_j(x_k)}{a_j}}.
\end{equation}
\end{lemma}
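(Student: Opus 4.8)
The plan is to turn the cut $L_j(\varphi_j^k,x_k)(\cdot)\le 0$ into an explicit scalar inequality, complete the square to recognize it as the complement of a ball, and then reconcile the resulting radius with the expression \eqref{eq:sphere_radius} using the definition of $c_j(x_k)$. First I would write out the minorant. By Lemma \ref{lem: quad form constr} and the prescribed form of $\varphi_j^k$, the constraint reads
\begin{equation*}
L_j(\varphi_j^k,x_k)(x) = -a_j\|x\|^2 + \langle b_{j,k}, x\rangle + c_j(x_k) \le 0,
\end{equation*}
with $c_j(x_k) = f_j(x_k) - \varphi_j^k(x_k)$. Since $a_j>0$, dividing by $a_j$ and reversing the sign makes this equivalent to
\begin{equation*}
\|x\|^2 - \frac{1}{a_j}\langle b_{j,k}, x\rangle - \frac{c_j(x_k)}{a_j} \ge 0.
\end{equation*}

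Next I would complete the square on the left-hand side, which yields
\begin{equation*}
\left\|x - \frac{b_{j,k}}{2a_j}\right\|^2 \ge \left\|\frac{b_{j,k}}{2a_j}\right\|^2 + \frac{c_j(x_k)}{a_j}.
\end{equation*}
This is exactly the set of points lying outside or on the boundary of the ball centered at $b_{j,k}/(2a_j)$ whose squared radius equals the right-hand side, which establishes the ``complement of a ball'' description with center as claimed.

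The only substantive remaining step is the algebraic identity matching this radius with $r_k$. Substituting $\varphi_j^k(x_k) = -a_j\|x_k\|^2 + \langle b_{j,k},x_k\rangle$ into $c_j(x_k)$ gives $c_j(x_k)/a_j = f_j(x_k)/a_j + \|x_k\|^2 - \langle b_{j,k}, x_k\rangle/a_j$, while expanding $\|x_k - b_{j,k}/(2a_j)\|^2$ produces the same norm and cross terms together with $\|b_{j,k}/(2a_j)\|^2$. Cancelling the common contributions yields
\begin{equation*}
\left\|\frac{b_{j,k}}{2a_j}\right\|^2 + \frac{c_j(x_k)}{a_j} = \left\|x_k - \frac{b_{j,k}}{2a_j}\right\|^2 + \frac{f_j(x_k)}{a_j} = r_k^2,
\end{equation*}
which is precisely \eqref{eq:sphere_radius}.

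I do not expect a genuine obstacle: the argument is a routine completion of squares, and the passage $a_j>0$ is what makes the division and sign reversal legitimate. The one point worth flagging is the well-definedness of the radius. Because $j\in I(x_k)$, Lemma \ref{lem: out set}\eqref{lem: out seti} gives $f_j(x_k)>0$, so the radicand $\|x_k - b_{j,k}/(2a_j)\|^2 + f_j(x_k)/a_j$ is strictly positive; hence $r_k$ is a genuine positive radius and the cut indeed carves out a nonempty open ball, confirming the geometric interpretation.
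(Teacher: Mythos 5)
Your proposal is correct and takes essentially the same approach as the paper: both arguments reduce the cut to an explicit quadratic inequality and complete the square to identify the center $b_{j,k}/(2a_j)$ and the radius \eqref{eq:sphere_radius}. The only cosmetic difference is that you carry the inequality $L_j(\varphi_j^k,x_k)(\cdot)\le 0$ through the whole computation, whereas the paper derives the radius from the boundary equation $L_j(\varphi_j^k,x_k)(x)=0$; your final observation that $f_j(x_k)>0$ makes the radicand strictly positive is the same use of this fact as the paper's closing parenthetical.
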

\begin{proof}
By construction, $L_j(\varphi_{j}^{k}, x_{k})(\cdot)> 0$ is an
an open ball
centered in $b_{j,k}/(2a_j)$, 
hence ${L_j(\varphi_{j}^{k}, x_{k})(x)= 0}$ is equivalent to
\begin{equation}
    \label{eq: open_ball_constr2}
    \left\|x-\frac{b_{j,k}}{2a_j}\right\|=r_k,
\end{equation}
for some $r_k\in \mathbb{R}_+$.
We calculate the radius $r_k$:
\begin{align*}
    &L_j(\varphi_{j}^{k}, x_{k})(x)= 0\\
    \Leftrightarrow \quad &\varphi_j^k(x)-\varphi_j^k(x_k)+f_j(x_k)=0\\
    \Leftrightarrow \quad  &-a_j\|x\|^2+b_{j,k}^\top x+a_j\|x_k\|^2-b_{j,k}^\top x_k+f_j(x_k)=0\\
    \Leftrightarrow \quad &\left\|x-\frac{b_{j,k}}{2a_j}\right\|^2=\left\|x_k-\frac{b_{j,k}}{2a_j}\right\|^2+\frac{f_j(x_k)}{a_j},
\end{align*}
which yields \eqref{eq:sphere_radius}
(by assumption, $f_j(x_k)>0$).
\end{proof}

\section{Cutting spheres Algorithm}

In the sequel, we make the following assumption.
\begin{assumption}\  \label{a:compact}
\begin{enumerate}[start=1,label=(\roman*)]
    \item\label{a:compacti} $\mathcal{A}\neq \varnothing$
\item\label{a:compactii} There exists $\alpha>J^*$ such that
the constraint functions $(f_i)_{i\in I}$ are continuous on an open set containing $E:=\lev{ \alpha}J$.
\end{enumerate}
\end{assumption}
Note that, since $J$ is coercive and $\mathcal{A}$ is closed,
Assumption \ref{a:compact}\ref{a:compacti} guarantees the existence of a solution to Problem \eqref{problem}.

Consider now the following scheme for the outer approximation method.

\begin{algorithm}[H]
\setcounter{algorithm}{0}
\caption{Cutting spheres algorithm}
\label{alg: alg1}
\begin{algorithmic} 
\ENSURE $x_{0}=z$, $k=0 $, $\mathcal{O}_{-1}=\mathbb{R}^n$. 
\LOOP
\IF{$(\exists\,i\in I)$ $f_i(x_{k}) > 0$}
\STATE Build $\mathcal{O}_{k}$ as in Definition \ref{def: out approx}.
\STATE Choose $x_{k+1} \in 
\arg\min
_{x\in \mathcal{O}_{k}} \;\|x-z\|^{2}$ \qquad\% solution of outer approximation problem
\ELSE 
\STATE $\mathcal{O}_{k}=\mathcal{O}_{k-1}$
\STATE $x_{k+1}=x_k$
\ENDIF
\STATE $k\leftarrow k+1$
\ENDLOOP
\end{algorithmic}
\end{algorithm}

\begin{lemma}\label{le:welldef}
Under Assumption \ref{a:compact},
Algorithm \ref{alg: alg1} is well-defined and generates a sequence $(x_k)_{k\in \mathbb{N}}$ in $E$.
\end{lemma}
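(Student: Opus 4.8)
The plan is to proceed by induction on $k$, proving simultaneously that each iterate satisfies $x_k\in E$ and that the update producing $x_{k+1}$ is legitimate, i.e.\ both the abstract subgradients needed to build $\mathcal{O}_k$ exist and the outer approximation subproblem attains its minimum. For the base case I would note that $x_0=z$ gives $J(x_0)=\tfrac12\|z-z\|^2=0$, and since $\alpha>J^*\ge 0$ this yields $x_0\in E$.

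For the inductive step, suppose $x_k\in E$. If $f_i(x_k)\le 0$ for every $i\in I$, the algorithm sets $x_{k+1}=x_k\in E$ and nothing further is required. Otherwise $I(x_k)\neq\varnothing$, and I would first verify that Definition \ref{def: out approx} applies, namely that $\partial_{\rm lsc}f_i(x_k)\neq\varnothing$ for each $i\in I(x_k)$. Here I would invoke Assumption \ref{a:compact}\ref{a:compactii}: it provides an open set $U\supseteq E$ on which every $f_i$ is finite and continuous, so $x_k\in E\subseteq U$ is an interior point of $\operatorname{dom}f_i$. Consequently the convex function $\widetilde{f}_i=f_i+a_i\|\cdot\|^2$ (with $a_i\ge\rho_i/2$) is finite and continuous at $x_k$, hence subdifferentiable there; choosing $b_i\in\partial\widetilde{f}_i(x_k)$ and using the equivalence \eqref{convex subgradient} produces a $\Phi^i_{\rm lsc}$-subgradient $\varphi_i^k\in\partial_{\rm lsc}f_i(x_k)$, so that $\mathcal{O}_k$ is well-defined.

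Next I would establish that the subproblem $\min_{x\in\mathcal{O}_k}\|x-z\|^2$ has a solution. By Lemma \ref{lem: out set}\ref{lem: out setii} we have $\mathcal{A}\subseteq\mathcal{O}_k$, and by Assumption \ref{a:compact}\ref{a:compacti} $\mathcal{A}\neq\varnothing$, so $\mathcal{O}_k$ is nonempty; it is also closed, being a finite intersection of sublevel sets of the continuous quadratic maps $L_i(\varphi_i^\ell,x_\ell)(\cdot)$. Since $J$ is coercive, the Weierstrass theorem furnishes a minimizer $x_{k+1}$, so the step is well-defined. Finally, the inclusion $\mathcal{A}\subseteq\mathcal{O}_k$ forces $J(x_{k+1})=\min_{\mathcal{O}_k}J\le\min_{\mathcal{A}}J=J^*<\alpha$, whence $x_{k+1}\in E$, which closes the induction.

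The hard part, I expect, will be the subgradient-existence claim rather than any of the topological steps. The subtlety is that continuity of the $f_i$ merely on the (compact) set $E$ would not place $x_k$ in the interior of $\operatorname{dom}f_i$, which is precisely why Assumption \ref{a:compact}\ref{a:compactii} demands continuity on an \emph{open} set containing $E$; once this is granted, subdifferentiability of the convexified $\widetilde{f}_i$ is standard convex analysis. The remaining ingredients — closedness of $\mathcal{O}_k$, coercivity of $J$, and the comparison $J(x_{k+1})\le J^*$ — are routine.
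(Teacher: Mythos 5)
Your proposal is correct and follows essentially the same route as the paper's proof: induction with base case $J(z)=0$, subdifferentiability of the convexified functions $\widetilde{f}_i=f_i+a_i\|\cdot\|^2$ at $x_k$ via continuity (the paper cites Proposition 16.17(ii) of Bauschke--Combettes, which is exactly your "standard convex analysis" step), nonemptiness of $\mathcal{O}_k$ from $\varnothing\neq\mathcal{A}\subseteq\mathcal{O}_k$, existence of the minimizer by coercivity and closedness, and the bound $J(x_{k+1})\le J^*<\alpha$ to close the induction. Your remark that continuity on an \emph{open} set containing $E$ is what places $x_k$ in the interior of $\operatorname{dom}f_i$ is a slightly more explicit justification than the paper gives, but it is the same argument.
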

\begin{proof}
We will proceed by induction to show that, for every $k\in \mathbb{N}$,
$\mathcal{O}_{k-1}$  can be constructed and $x_{k}\in E$. The property obviously
holds for $k=0$ since $J(z) = 0$. 
Assume that $x_{k}\in E$ and $\mathcal{O}_{k-1}$  has been constructed for some $k\in \mathbb{N}$. We will show that $x_{k+1}\in E$ and $\mathcal{O}_{k}$  can be constructed. 

If $(\forall i \in I)$ $f_i(x_k) \le 0$, the result follows from the fact that $\mathcal{O}_{k}= \mathcal{O}_{k-1}$
and $x_{k+1} = x_{k}$. On the other hand, if the constraints are violated
for indices in the nonempty subset ${I}(x_k)$ of $I$, building $\mathcal{O}_k$ necessitates that $(\forall i \in {I}(x_k))$ 
$\partial_{\rm lsc}f_i(x_k) \neq \varnothing$. 
We know that $x_k\in E$ and, by assumption, $(f_i)_{i\in I(x_k)}$  are continuous on $E$.
This implies that, for every $i\in I(x_k)$ and $a_i\ge \rho_i/2$,
$\Tilde{f}_i =f_i+a_i\|\cdot\|^2$ is a convex subdifferentiable function on $E$
(see \cite[Proposition 16.17(ii)]{bauschke2017convex}), and consequently $\partial_{\rm lsc}f_i(x_k)\neq \varnothing$.
According to Lemma \ref{lem: out set}\eqref{lem: out setii}, $\varnothing \neq \A \subset \mathcal{O}_k$.
As $x \mapsto \|x-z\|^2$ is a coercive continuous function, the existence
of a minimizer $x_{k+1}$ of this function is guaranteed on the closed nonempty set $\mathcal{O}_k$
and $J(x_{k+1}) \le J^* < \alpha$, which shows that $x_{k+1} \in E$.
\end{proof}

The outer approximation sets introduced in Definition \ref{def: out approx}
produce a cumulative basis for Algorithm~\ref{alg: alg1} in the sense of  \cite[section 5.1]{combettes2000strong}, i.e. 
they guarantee that, for every iteration $k$,
\begin{equation}
    \label{eq: inclusions}
\A\subseteq \OAP_k\subseteq \OAP_{k-1} \subseteq \ldots \subseteq \OAP_0.
\end{equation}

We now establish the following necessary and sufficient condition for the algorithm to deliver a global optimal solution in a finite number of iterations.
\begin{lemma}
\label{lem: alg1}
Let $(x_k)_{k\in\mathbb{N}}$ be the sequence generated by Algorithm~\ref{alg: alg1}. Let $k\in \mathbb{N}$, and let $I(x_k)$ be the index set of violated constraints as defined by \eqref{e:defIxell}.
Then, 
$I(x_k)=\varnothing$ if and only if $J^*=\min_{x\in \mathcal{O}_{k-1}}\; J(x)$.

\end{lemma}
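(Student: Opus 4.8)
The plan is to prove both implications around a single invariant: that the iterate $x_k$ always solves the outer problem attached to $\mathcal{O}_{k-1}$, namely $J(x_k)=\min_{x\in\mathcal{O}_{k-1}}J(x)$. First I would establish this invariant by induction on $k$. For $k=0$ it holds because $x_0=z$ and $\mathcal{O}_{-1}=\mathbb{R}^n$, so that $x_0=\arg\min_{x\in\mathbb{R}^n}J(x)$ trivially. For the inductive step I distinguish the two branches of Algorithm \ref{alg: alg1}: if $I(x_k)\neq\varnothing$, then $x_{k+1}$ is chosen directly in $\arg\min_{x\in\mathcal{O}_k}J(x)$, which is exactly the invariant at index $k+1$; if $I(x_k)=\varnothing$, then $\mathcal{O}_k=\mathcal{O}_{k-1}$ and $x_{k+1}=x_k$, so the invariant is inherited verbatim. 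Well-definedness of these minimizers (closedness and nonemptiness of $\mathcal{O}_k$, coercivity of $J$) is guaranteed by Lemma \ref{le:welldef}.

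Next I would record the one-sided bound $\min_{x\in\mathcal{O}_{k-1}}J(x)\le J^*$, which is immediate from the outer-approximation inclusion $\mathcal{A}\subseteq\mathcal{O}_{k-1}$ (Lemma \ref{lem: out set}\eqref{lem: out setii}, see also \eqref{eq: inclusions}) together with $J^*=\min_{x\in\mathcal{A}}J(x)$: minimizing over the larger set cannot increase the value. The forward implication is then a clean sandwich. Assuming $I(x_k)=\varnothing$, definition \eqref{e:defIxell} gives $f_i(x_k)\le 0$ for every $i\in I$, i.e. $x_k\in\mathcal{A}$, whence $J(x_k)\ge J^*$. Combining with the invariant and the bound yields $J^*\le J(x_k)=\min_{x\in\mathcal{O}_{k-1}}J(x)\le J^*$, so equality holds throughout and $\min_{x\in\mathcal{O}_{k-1}}J(x)=J^*$.

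The converse is where the real difficulty lies, and I expect it to be the main obstacle. Assuming $\min_{x\in\mathcal{O}_{k-1}}J(x)=J^*$, the invariant gives $J(x_k)=J^*$, and I must upgrade this equality of optimal \emph{values} to the feasibility $x_k\in\mathcal{A}$. The natural route is to fix a solution $\widehat{x}$ of \eqref{problem}, so that $\widehat{x}\in\mathcal{A}\subseteq\mathcal{O}_{k-1}$ and $J(\widehat{x})=J^*$; then both $x_k$ and $\widehat{x}$ are closest points of $z$ in $\mathcal{O}_{k-1}$. If $x_k\notin\mathcal{A}$, the quadratic cut $L_i(\varphi_i^k,x_k)(\cdot)\le0$ built for some $i\in I(x_k)$ satisfies $L_i(\varphi_i^k,x_k)(x_k)=f_i(x_k)>0$ by Lemma \ref{lem: out set}\eqref{lem: out seti}, whereas $\widehat{x}$ (like every point of $\mathcal{A}$) satisfies it; hence $x_k\neq\widehat{x}$. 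The crux is precisely to exclude such an infeasible ``tie'': since $\mathcal{O}_{k-1}$ is in general nonconvex, the strict convexity of $J$ does not by itself force the projection $P_{\mathcal{O}_{k-1}}(z)$ to be unique, so an infeasible minimizer attaining the value $J^*$ is not excluded a priori. I would close this gap by arguing that the selected iterate $x_k$ can be identified with a feasible optimizer — invoking uniqueness of $P_{\mathcal{O}_{k-1}}(z)$ under the standing assumptions (Assumption \ref{a:compact}), equivalently ruling out minimizers in $\mathcal{O}_{k-1}\setminus\mathcal{A}$ at height $J^*$ — which then forces $x_k=\widehat{x}\in\mathcal{A}$ and hence $I(x_k)=\varnothing$.
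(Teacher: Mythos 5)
Your invariant, the bound $\min_{x\in\mathcal{O}_{k-1}}J(x)\le J^*$, and the forward implication are all correct and coincide with the paper's argument (the paper simply states the invariant $J(x_k)=\min_{x\in\mathcal{O}_{k-1}}J(x)$ where you prove it by induction, then runs the same sandwich). The genuine gap is in the converse, exactly where you predicted the difficulty: your closing step is a deferral, not an argument. You propose to ``invoke uniqueness of $P_{\mathcal{O}_{k-1}}(z)$'' or, equivalently, to ``rule out minimizers in $\mathcal{O}_{k-1}\setminus\mathcal{A}$ at height $J^*$'', but neither property is established anywhere, and both are false in general under Assumption \ref{a:compact}. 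Concretely, take $n=2$, $z=0$, $f_1(x)=1-\|x\|^2$, $f_2(x)=\tfrac14-\|x-(1,0)\|^2$. Then $J^*=1$, attained at $(-1,0)\in\mathcal{A}$. At $x_0=0$ only $f_1$ is violated, and with $a_1=\rho_1/2=1$ the quadratic cut is exactly $1-\|x\|^2\le0$, so $\mathcal{O}_0=\{x\mid\|x\|\ge1\}$ and $\arg\min_{x\in\mathcal{O}_0}J(x)$ is the whole unit circle. The algorithm is free to select $x_1=(1,0)$, which satisfies $J(x_1)=\min_{x\in\mathcal{O}_0}J(x)=J^*$ yet violates $f_2$, so $I(x_1)\neq\varnothing$. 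Hence the projection is non-unique, an infeasible minimizer at height $J^*$ exists, and no argument of the kind you sketch can close the converse for an arbitrary selection from the argmin.

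For fairness, you should know that the paper's own proof does not fare better on this point: it runs the chain $I(x_k)=\varnothing \Leftrightarrow x_k\in\mathcal{A} \Leftrightarrow J(x_k)\ge J^* \Leftrightarrow J(x_k)= J^*$, and the implication $J(x_k)\ge J^*\Rightarrow x_k\in\mathcal{A}$ --- the only nontrivial link, and precisely your ``infeasible tie'' --- is asserted without justification. So your diagnosis of where the crux lies is exactly right and more candid than the paper's treatment; but as written, your proposal establishes only the forward direction, and the repair you gesture at (uniqueness of the projection onto the nonconvex set $\mathcal{O}_{k-1}$) cannot work.
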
   
 \begin{proof}
 Notice that
      \begin{equation}
          \label{x_k def}
          J(x_k)=\min_{x\in \mathcal{O}_{k-1}}\ \ J(x)
      \end{equation}
       and, by Lemma \ref{lem: out set}\eqref{lem: out setii}, $\A\subset \mathcal{O}_{k-1}$. Hence, $J(x_k)\le J^*$.
       We have the following equivalences:
       \begin{align}
           & I(x_k)  = \varnothing \nonumber\\
           \Leftrightarrow\quad  & x_k \in \A \nonumber\\
           \Leftrightarrow\quad & J(x_k)\ge J^* \nonumber\\
           \Leftrightarrow\quad & J(x_k)= J^*.
       \end{align}
 \end{proof}

\section{Convergence of Cutting spheres algorithm \ref{alg: alg1}}

The following preliminary result will be needed in our main convergence theorem.
\begin{theorem}
\label{th: subdifferential convergence}
Suppose that Assumption \ref{a:compact} holds.

Let 
$(x_\ell)_{\ell\in \mathbb{N}}$ be a sequence in $\mathbb{R}^n$
converging to
$\overline{x}\in \mathbb{R}^n$.
Let $i\in I$
and let $(b_i^\ell)_{\ell\in \mathbb{N}}$ be
such that
$$
(\forall\,\ell\in \mathbb{N})\ \ b_i^\ell\in\partial\,\Tilde{f}_i(x_\ell),
$$
where $\Tilde{f}_i =f_i+a_i\|\cdot\|^2$ with $a_i\ge \rho_i/2$, and $\partial \Tilde{f}_i$ is its
Moreau subdifferential. 
\begin{enumerate}
    \item\label{th: subdifferential convergencei} If $\overline{x}\in \operatorname{int}\operatorname{dom}\Tilde{f}_i$    ($\text{int}$ denotes the interior), there exists a convergent subsequence of $(b_i^\ell)_{\ell\in \mathbb{N}}$, and every convergent subsequence $(b_i^{\ell_k})_{k\in \mathbb{N}}$ converges
    to a subgradient of $\Tilde{f}_i$ at $\overline{x}$.
\item In addition, if $\Tilde{f}_i$ is differentiable at $\overline{x}$, then
$b_i^\ell\rightarrow \nabla \Tilde{f}_i(\overline{x})$.
\end{enumerate}
\end{theorem}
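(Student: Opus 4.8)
The plan is to exploit two classical properties of the Moreau subdifferential of the finite convex function $\Tilde{f}_i$ at an interior point of its domain: local boundedness and closedness of the graph (outer semicontinuity).

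First I would secure the existence of a convergent subsequence. Since $\overline{x}\in\operatorname{int}\operatorname{dom}\Tilde{f}_i$ and $\Tilde{f}_i$ is convex and finite on a neighborhood of $\overline{x}$, it is locally Lipschitz continuous there; hence the set-valued map $\partial\Tilde{f}_i$ is locally bounded at $\overline{x}$, i.e.\ there are $r>0$ and $M\ge 0$ with $\|b\|\le M$ whenever $\|x-\overline{x}\|<r$ and $b\in\partial\Tilde{f}_i(x)$. As $x_\ell\to\overline{x}$, we get $\|x_\ell-\overline{x}\|<r$ for all large $\ell$, so $(b_i^\ell)_{\ell\in\mathbb{N}}$ is eventually bounded and the Bolzano–Weierstrass theorem produces a convergent subsequence.

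Next I would identify the limit of an arbitrary convergent subsequence $(b_i^{\ell_k})_{k\in\mathbb{N}}$, say $b_i^{\ell_k}\to\overline{b}$. Starting from the subgradient inequality at $x_{\ell_k}$, namely $\Tilde{f}_i(x)-\Tilde{f}_i(x_{\ell_k})\ge\langle b_i^{\ell_k},x-x_{\ell_k}\rangle$ for every $x\in\mathbb{R}^n$ (cf.\ \eqref{convex subgradient}), I would let $k\to\infty$. Continuity of $\Tilde{f}_i$ at $\overline{x}$ (again a consequence of interiority) gives $\Tilde{f}_i(x_{\ell_k})\to\Tilde{f}_i(\overline{x})$, while $x_{\ell_k}\to\overline{x}$ and $b_i^{\ell_k}\to\overline{b}$ make the inner product converge. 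The limit inequality $\Tilde{f}_i(x)\ge\Tilde{f}_i(\overline{x})+\langle\overline{b},x-\overline{x}\rangle$, holding for all $x$, is precisely $\overline{b}\in\partial\Tilde{f}_i(\overline{x})$, which proves (i).

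For part (ii), differentiability of $\Tilde{f}_i$ at $\overline{x}$ forces the singleton $\partial\Tilde{f}_i(\overline{x})=\{\nabla\Tilde{f}_i(\overline{x})\}$. By part (i) the whole sequence $(b_i^\ell)_{\ell\in\mathbb{N}}$ is bounded and every one of its subsequential limits equals $\nabla\Tilde{f}_i(\overline{x})$; a bounded sequence in $\mathbb{R}^n$ whose subsequential limits all coincide converges to that common value, so $b_i^\ell\to\nabla\Tilde{f}_i(\overline{x})$. I expect the genuine content to sit in the local boundedness step, as it is the only one truly using the interiority hypothesis $\overline{x}\in\operatorname{int}\operatorname{dom}\Tilde{f}_i$ and it rests on the local Lipschitz property of finite convex functions, citable from \cite{bauschke2017convex}; the remaining arguments are routine passages to the limit in the subgradient inequality.
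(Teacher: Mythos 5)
Your proposal is correct and follows essentially the same route as the paper's proof: local boundedness of $\partial\Tilde{f}_i$ at the interior point $\overline{x}$ (via \cite[Proposition 16.17(iii)]{bauschke2017convex} in the paper, via local Lipschitz continuity in your write-up) yields a bounded sequence and hence a convergent subsequence, the limit is identified as an element of $\partial\Tilde{f}_i(\overline{x})$, and part (ii) follows from boundedness plus uniqueness of the cluster point. The only cosmetic difference is that where the paper invokes the graph-closedness of the subdifferential (\cite[Proposition~16.36]{bauschke2017convex}), you reprove that step inline by passing to the limit in the subgradient inequality, which is a valid unpacking of the same fact.
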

\begin{proof}\ 
\begin{enumerate}
    \item The function $\Tilde{f}_i$ is continuous on $\text{int dom }\Tilde{f}_i$, see \cite[Corollary 8.39]{bauschke2017convex}.
    We deduce from \cite[Proposition 16.17(iii)]{bauschke2017convex},
    that there exists a closed ball $B(\overline{x},\rho)$ with center
    $\overline{x}$ and radius $\rho > 0$ such that $\partial \Tilde{f}_i(B(\overline{x},\rho))$ is bounded.
    As there exists $\ell_0\in \mathbb{N}$ such that $\{x_\ell\}_{\ell\ge \ell_0}
    \subset B(\overline{x},\rho)$, $(b_i^\ell)_{\ell\in \mathbb{N}}$ is a bounded sequence, hence it has a sequential cluster point \cite[Lemma 2.45]{bauschke2017convex}.\\
    Let $(b_i^{\ell_k})_{k\in \mathbb{N}}$
    be a subsequence of $(b_i^\ell)_{\ell\in \mathbb{N}}$ converging to $\overline{b}_i$.
    It follows from \cite[Proposition~16.36]{bauschke2017convex}
    that $\overline{b}_i \in \partial \Tilde{f}_i(\overline{x})$.
\item Assume that $\Tilde{f}_i$ is differentiable at $\overline{x}$. 
Then $\partial \Tilde{f}_i(\overline{x}) = \{\nabla  \Tilde{f}_i(\overline{x})\}$.
Since $(b_i^{\ell})_{\ell\in \mathbb{N}}$ is bounded (see proof of (i)) and has a single sequential cluster point, it converges to $\nabla \Tilde{f}_i(\overline{x})$.
\end{enumerate}
\end{proof}

The following remark will be useful in the proof of convergence of Algorithm \ref{alg: alg1}.
\begin{remark}\label{a:constcurvi}
Let $i \in I$ and let $a_i\in[\rho_{i}/2,+\infty[$. At every iteration $k\in \mathbb{N}$ of Algorithm \ref{alg: alg1} such that
$I(x_k)\neq \varnothing$, the subgradients $(\varphi_i^k)_{i\in I(x_k)}$
belong to the set
\begin{equation}
\{\varphi\colon \mathbb{R}^n\rightarrow\mathbb{R}\mid
(\forall x \in \mathbb{R}^n)\,\varphi(x)=-a_i\|x\|^{2}+\langle b,x\rangle, b\in\mathbb{R}^{n}\}.
\end{equation}
\end{remark}
This means that the same curvature constant $a_i$ remains unchanged through the whole iteration process,
for each constraint function $f_i$, $i\in I$, when computing its abstract $\Phi_{lsc}^{i,0}-$subgradient.

The theorem below, which constitutes our main result, studies the convergence properties of Cutting spheres algorithm \ref{alg: alg1}.
\begin{theorem}
\label{th: convergence}
Consider Problem \eqref{problem}.
Suppose that Assumption \ref{a:compact} holds.
Let $(x_k)_{k\in\mathbb{N}}$ be the sequence generated by  Cutting spheres algorithm \ref{alg: alg1}.
Then,
\begin{enumerate}
    \item\label{th: convergencei}  $(x_k)_{k\in\mathbb{N}}$  possesses a convergent
subsequence, and every sequential cluster point of $(x_k)_{k\in\mathbb{N}}$ is a global solution to \eqref{problem};
\item if  \eqref{problem} has a unique solution, then $(x_k)_{k\in\mathbb{N}}$
converges to this solution;
\item $\big(J(x_k)\big)_{k\in \mathbb{N}}$ is a nondecreasing sequence converging
to the optimal value $J^*$ of \eqref{problem}. 
\end{enumerate}

\end{theorem}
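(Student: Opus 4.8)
The plan is to combine three ingredients: compactness of the sublevel set $E=\lev{\alpha}J$, monotonicity of the objective values along the iterates, and a geometric \emph{ball-exclusion} estimate specific to the quadratic cuts that forces every cluster point to be feasible. First I would dispatch the easy structural facts. By Lemma \ref{le:welldef} the whole sequence lies in $E$, and since $J$ is coercive $E$ is closed and bounded, hence compact; the Bolzano--Weierstrass property then yields a convergent subsequence, which is the first half of assertion \eqref{th: convergencei}. Next I would record that $\big(J(x_k)\big)_{k\in\mathbb N}$ is nondecreasing and bounded above by $J^*$: since $x_{k+1}$ minimizes $J$ over $\mathcal O_k$ while $x_k$ minimizes $J$ over $\mathcal O_{k-1}\supseteq \mathcal O_k$ (see \eqref{eq: inclusions}), we get $J(x_{k+1})\ge J(x_k)$, and $\mathcal A\subseteq \mathcal O_{k-1}$ (Lemma \ref{lem: out set}\eqref{lem: out setii}) gives $J(x_k)=\min_{\mathcal O_{k-1}}J\le J^*$. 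Hence $J(x_k)\uparrow J_\infty$ for some $J_\infty\le J^*$, and it only remains to show that cluster points are feasible and that $J_\infty=J^*$.

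The heart of the argument --- and the step I expect to be the main obstacle --- is showing that every cluster point $\overline x$ lies in $\mathcal A$. Here I would deliberately avoid the classical convex cutting-plane estimate $\|x_{k+1}-x_k\|^2\le 2\big(J(x_{k+1})-J(x_k)\big)$, which is unavailable because the sets $\mathcal O_k$ are nonconvex (they are complements of unions of open balls). Instead I would exploit the spherical geometry of Lemma \ref{lem: geo_sphere}. Suppose, for contradiction, that $f_j(\overline x)=\delta>0$ for some $j\in I$, and fix a subsequence $x_{k_m}\to\overline x$. Because $f_j$ is finite and continuous on an open set containing $E$, we have $\overline x\in\operatorname{int}\operatorname{dom}\widetilde f_j$ with $\widetilde f_j=f_j+a_j\|\cdot\|^2$, and $f_j(x_{k_m})>\delta/2$ for large $m$; thus $j\in I(x_{k_m})$ and a quadratic cut for $j$ is generated at $x_{k_m}$, excising the open ball $B(c_m,r_{k_m})$ with centre $c_m=b_{j,k_m}/(2a_j)$ and radius $r_{k_m}$ given by \eqref{eq:sphere_radius}. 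By Theorem \ref{th: subdifferential convergence}\eqref{th: subdifferential convergencei} the subgradients $(b_{j,k_m})_m$ are bounded, so the centres $c_m$ and radii $r_{k_m}$ remain in a bounded range $[0,R]$. The depth of $x_{k_m}$ inside its ball is then $r_{k_m}-\|x_{k_m}-c_m\|=\frac{f_j(x_{k_m})/a_j}{r_{k_m}+\|x_{k_m}-c_m\|}\ge \frac{\delta/(2a_j)}{2R}=:\varepsilon_0>0$, uniformly in $m$. Consequently $B(x_{k_m},\varepsilon_0)\subseteq B(c_m,r_{k_m})$, and since every point of the open ball violates the cut (Lemma \ref{lem: geo_sphere}), we obtain $B(x_{k_m},\varepsilon_0)\cap\mathcal O_{k_m}=\varnothing$.

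To close the contradiction I would invoke the nestedness \eqref{eq: inclusions}: for $m'>m$ one has $x_{k_{m'}}\in\mathcal O_{k_{m'}-1}\subseteq\mathcal O_{k_m}$, whence $\|x_{k_{m'}}-x_{k_m}\|\ge\varepsilon_0$; but both terms converge to $\overline x$, so $\|x_{k_{m'}}-x_{k_m}\|<\varepsilon_0$ for $m,m'$ large --- a contradiction. Therefore $f_j(\overline x)\le 0$ for every $j$, i.e.\ $\overline x\in\mathcal A$, so $J(\overline x)\ge J^*$; combined with $J(\overline x)=J_\infty\le J^*$ this gives $J(\overline x)=J^*=J_\infty$. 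This simultaneously proves that every cluster point is a global minimizer (completing \eqref{th: convergencei}) and that $J(x_k)\uparrow J^*$ (assertion (iii)). For assertion (ii), if the minimizer $x^\star$ is unique then every cluster point of the bounded sequence equals $x^\star$, hence the whole sequence converges to $x^\star$.

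The only genuinely delicate point in this plan is the uniform radius bound $R$, which is precisely what the boundedness part of Theorem \ref{th: subdifferential convergence} supplies and is the reason the constant curvature $a_j$ is frozen across iterations (Remark \ref{a:constcurvi}); without it the depth $\varepsilon_0$ could degenerate to $0$ and the exclusion argument would fail. I would also dispose separately of the trivial terminating case in which $I(x_k)=\varnothing$ for some $k$: there $x_k\in\mathcal A$ is already optimal by Lemma \ref{lem: alg1}, the sequence is eventually constant, and all three assertions hold at once.
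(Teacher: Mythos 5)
Your proof is correct, but the crucial step --- feasibility of cluster points --- is argued by a genuinely different route than the paper's. The paper works with the cut inequalities themselves: since all later iterates satisfy the cut $L_j(\varphi_j^{p_\ell},x_{p_\ell})(\cdot)\le 0$ generated at $x_{p_\ell}$, continuity gives $L_j(\varphi_j^{p_\ell},x_{p_\ell})(\overline{x})\le 0$, and then the full strength of Theorem \ref{th: subdifferential convergence}\eqref{th: subdifferential convergencei} (a subsequence of subgradients converging to an element of $\partial\widetilde f_j(\overline{x})$) is used to pass to the limit in $f_j(x_{q_\ell})+\varphi_j^{q_\ell}(\overline{x})-\varphi_j^{q_\ell}(x_{q_\ell})\le 0$ via the three-term estimate \eqref{eq: 4 modules}, yielding $f_j(\overline{x})=0$ in the violated case. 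You instead run a quantitative contradiction: from \eqref{eq:sphere_radius} the iterate $x_{k_m}$ sits at depth $\bigl(f_j(x_{k_m})/a_j\bigr)/\bigl(r_{k_m}+\|x_{k_m}-c_m\|\bigr)\ge\varepsilon_0>0$ inside the excised ball, so $B(x_{k_m},\varepsilon_0)\cap\mathcal{O}_{k_m}=\varnothing$, while nestedness forces later iterates into $\mathcal{O}_{k_m}$, contradicting convergence of the subsequence. Your version is more elementary in that it needs only \emph{boundedness} of the subgradients (not the graph-closedness of $\partial\widetilde f_j$ implicit in the paper's limit passage) and it produces an explicit uniform separation constant, which is a nice Fej\'er-type insight; note, though, that strictly speaking Theorem \ref{th: subdifferential convergence}\eqref{th: subdifferential convergencei} asserts subsequential convergence rather than boundedness, so you should either extract boundedness by the sub-subsequence trick or cite the local boundedness of the Moreau subdifferential directly. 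What the paper's softer argument buys in exchange is independence from the ball geometry: it never divides by $a_j$, so it covers cuts with $a_j=0$ (linear cuts for convex constraints), whereas your depth formula degenerates there and would need the separate halfspace-depth patch $f_j(x_{k_m})/\|b_{j,k_m}\|$. The remaining parts (compactness, monotonicity of $\bigl(J(x_k)\bigr)_{k\in\mathbb{N}}$, uniqueness, and the identification $J_\infty=J^*$) coincide with the paper's treatment.
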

\begin{proof}\ 
\begin{enumerate}
    \item  According to Lemma \ref{le:welldef},
$\{x_k\}_{k\in\mathbb{N}}\subset E$. Moreover, $E$ is a nonempty closed ball in $\mathbb{R}^n$, hence it is compact. We deduce that $(x_k)_{k\in\mathbb{N}}$  possesses a subsequence converging to a point in $E$ and 
that every sequential cluster point of $(x_k)_{k\in\mathbb{N}}$  belongs to  $E$.

Now, let us show that every converging subsequence of $(x_k)_{k\in\mathbb{N}}$
is a minimizing sequence, i.e., its limit $\overline{x}$ is such that ${J(\overline{x})=J^*}$.
We first note that, if at some iteration $k$,  the obtained iterate $x_{k}$ is feasible then, by Lemma \ref{lem: alg1}, $x_{k}=\overline{x}$ is a global solution to Problem \eqref{problem}, which means that the algorithm solves the problem in a finite number of steps. We can thus focus on the case when, for every $k\in \mathbb{N}$,
$x_k$ is infeasible for \eqref{problem}.

Consider a convergent subsequence $(x_{k_\ell})_{{k_\ell}\in\mathbb{N}}$ with limit point $\overline{x}\in E$. Let $j\in I$. 
Two situations may arise:
\begin{enumerate}[start=1,label=(\alph*)] 
\item There exists $\ell_0 \in \mathbb{N}$ such that, for every 
$\ell \ge \ell_0$, the $j$-th constraint is satisfied by $x_{k_{\ell}}$, i.e., 
$f_{j}(x_{k_{\ell}})\le 0$. Then, by continuity of $f_{j}$ on $E$, $f_{j}(\overline{x})\le 0$. 
\item\label{c:infiniteviolation} There exists a subsequence
$(x_{p_\ell})_{\ell \in \mathbb{N}}$ of $(x_{k_\ell})_{{k_\ell}\in\mathbb{N}}$
for which the $j$-th constraint is violated, i.e., for every $\ell \in \mathbb{N}$, $f_{j}(x_{p_\ell})>0$.
\end{enumerate}

Let us investigate Case \ref{c:infiniteviolation}.
By construction, for every $\ell \in \mathbb{N}$ and $p>p_\ell$, ${L_{j}(\varphi^{p_\ell}_{j},x_{p_\ell})(x_{p})\le 0}$
where 
$\varphi^{p_\ell}_{j} \in \partial_{\rm lsc} f_{j}(x_{p_\ell})$.
 
 Since $L_{j}(\varphi^{p_\ell}_{j},x_{p_\ell})(\cdot)$ is continuous,
 we deduce that
 \begin{equation}
     \label{eq: barx}
     L_{j}(\varphi^{p_\ell}_{j},x_{p_\ell})(\overline{x})\le 0.
 \end{equation}
According to Remark~\ref{a:constcurvi}, $\varphi^{p_\ell}_{j}$ has the following 
form:
$$
(\forall x \in \mathbb{R}^n)\quad 
\varphi^{p_\ell}_{j}(x) =-a_j\|x\|^2+\langle b_{j}^{p_\ell},x \rangle,
$$
where $b_{j}^{p_\ell} \in\partial \Tilde{f}_j(x_{p_\ell})$ with 
$\Tilde{f}_j = f_{j}+a_j\|\cdot\|^2$. Since $x_{p_\ell} \to \overline{x} \in E \subset \text{int dom }\Tilde{f}_j$ (see \cite[Corollary 8.39]{bauschke2017convex}), it follows from Theorem  \ref{th: subdifferential convergence}\eqref{th: subdifferential convergencei}
that there exists a subsequence $(b_{j}^{q_\ell})_{\ell \in \mathbb{N}}$ of $(b_{j}^{p_\ell})_{\ell \in \mathbb{N}}$
converging to ${\overline{b}_{j}\in\partial 
\Tilde{f}_j(\overline{x})}$.
Using \eqref{eq: barx} and Lemma~\ref{lem: quad form constr} yields
\begin{equation}\label{e:Ljphiqlxql}
    L_j(\varphi_{j}^{q_\ell}, x_{q_\ell})(\overline{x}) =f_{j}(x_{q_\ell})+\varphi_{j}^{q_\ell}(\overline{x})-\varphi_{j}^{q_\ell}(x_{q_\ell})\le 0.
\end{equation}
Let us look at the behavior of this terms as
$\ell\rightarrow \infty$. By continuity, 
\begin{equation}\label{e:fjqltofibx}
f_j(x_{q_\ell})\rightarrow f_j(\overline{x})\ge0.
\end{equation}
Let us show that $\varphi_{j}^{q_\ell}(\overline{x})-
\varphi_{j}^{q_\ell}(x_{q_\ell})\rightarrow 0$. 
By defining
$$
(\forall x \in \mathbb{R}^n)\quad 
\bar{\varphi}_{j}(x) =-a_j\|x\|^2+\langle \overline{b}_{j},x \rangle,
$$
we obtain the following inequality
\begin{equation}
    \label{eq: 4 modules}
    \begin{aligned}
    |\varphi_{j}^{q_\ell}(\overline{x})-\varphi_{j}^{q_\ell}(x_{q_\ell})|
    &\le |\varphi_{j}^{q_\ell}(\overline{x})-\bar{\varphi_{j}}(\overline{x})|
    +|\bar{\varphi}_j(\overline{x})-\bar{\varphi_{j}}(x_{q_\ell})|+
    |\bar{\varphi_{j}}(x_{q_\ell})-\varphi_{j}^{q_\ell}(x_{q_\ell})|\\
    &=|\langle b_{j}^{q_\ell}-\overline{b}_{j},\overline{x}\rangle|+
    |\bar{\varphi}_j(\overline{x})-\bar{\varphi_{j}}(x_{q_\ell})|+
    |\langle\overline{b}_j-b_{j}^{q_\ell},x_{q_\ell}\rangle|.
    \end{aligned}
\end{equation}
The first and last terms converge to $0$, because of the convergence of $(b_{j}^{q_\ell})_{\ell \in \mathbb{N}}$ to $\overline{b}_j$ and the boundedness
of $(x_{q_\ell})_{\ell \in \mathbb{N}}$. The second one converges to
$0$ by continuity of $\bar{\varphi}_j$. In view of these facts,
we deduce from \eqref{e:Ljphiqlxql} and \eqref{e:fjqltofibx} that $f_j(\overline{x}) = 0$,
which proves that $\overline{x}$ satisfies the $j$-th constraint.\\
By repeating the same reasoning for each constraint $f_j$, $j\in I$ we prove that  $\overline{x}\in \A$, hence $J(\overline{x})\ge J^*$. 
According to Lemma \ref{lem: out set}\eqref{lem: out setii}
and the continuity of $J$,
$$
J(x_{k_\ell})\le J^* \Rightarrow J(\overline{x})\le J^*.
$$
Then, $J^*=J(\overline{x})$ which completes the proof of \eqref{th: convergencei}.
\item if  \eqref{problem} has a unique solution, then $(x_k)_{k\in \mathbb{N}}$
is a bounded sequence with a unique sequential cluster point. Thus, it converges to this point.
\item  By \eqref{eq: inclusions}, $\big(J(x_k)\big)_{k\in \mathbb{N}}$ is a nondecreasing sequence which is upper bounded by $J^*$. In addition,
it follows from \eqref{th: convergencei} that $\big(J(x_k)\big)_{k\in \mathbb{N}}$ has
a subsequence converging to $J^*$. The limit of $\big(J(x_k)\big)_{k\in \mathbb{N}}$ is thus equal to $J^*$.
\end{enumerate}
\end{proof}

\section{Warm Restart}
\subsection{An algorithm with warm restarts}
The number of quadratic constraints of the set  $\mathcal{O}_k$ in Algorithm \ref{alg: alg1} becomes increasingly large as the iteration number $k$ grows. To overcome this drawback,  we introduce a warm restart procedure in the proposed outer approximation approach as detailed in Algorithm \ref{alg: warm_rest}. 

In contrast to Algorithm \ref{alg: alg1}, Algorithm \ref{alg: warm_rest}
can perform two types of iterations, namely \textit{restart iterations} and \textit{cumulative iterations}. At iteration number $k\in \mathbb{N}$, the following notation will be used:
\begin{itemize}
    \item $r_k$ is the index of the last \textit{restart iteration},
    \item $\overline{m}> 0$ is a positive real number, which is used as an upper  
    bound on the computational cost $m_k$ for 
    computing a cumulative iteration at $k$ (typically, $m_k$ is the number of quadratic constraints that $\OAP_k$ would involve if $k$ is a cumulative iteration).
 \end{itemize}
 A \textit{cumulative iterations} keep all the constraints, which were built in the algorithm in the previous steps since the last \textit{restart iteration}. $\mathcal{O}_k$ can still be built recursively as in
 \eqref{e:recurOk}. The difference, however, with the Cutting sphere algorithm \ref{alg: alg1} is
 that we only aggregate constraints of the form
 $L_i(\varphi_i^\ell, x_\ell)(x) \le 0$ for $i\in I(x_\ell)$
 and $\ell \in \{r_k,\ldots,k\}$.

A \textit{restart iteration} is performed whenever the following two conditions are simultaneously met:
\begin{itemize}
    \item the cost at the current iteration
    strictly increases with respect to the last \textit{restart iteration}, i.e., $J(x_k)>J(x_{r_k})$,
    \item the computing cost $m_k$ of a cumulative iteration step at $k$ exceeds the chosen bound $\overline{m}$.
\end{itemize}
 Unlike \textit{cumulative iterations}, a \textit{restart iteration} takes into account only the constraints built at the current iteration and the lower level bound $J(x_k)$, i.e., the approximation set is given by
\begin{equation}\label{iteration:restart}
      \mathcal{O}_k = \{ x \in \mathbb{R}^n \mid 
    (\forall i \in I(x_k))\;
    L_i(\varphi_i^k, x_k)(x) \le 0\;\text{and}\;J(x)\ge J(x_k)\}.
\end{equation} 
A restart iteration is always followed by a cumulative one since the condition
$J(x_{k+1}) = J(x_{r_{k+1}})$ is then satisfied in Algorithm \ref{alg: warm_rest}.
It is worth observing that, because of the quadratic form of function $J$, minimizing $J$ over $\mathcal{O}_k$
remains
a S-QCQP problem.

\begin{algorithm}[H]
\caption{Cutting spheres with warm restarts}
\label{alg: warm_rest}
\begin{algorithmic}[1] 
\ENSURE $x_0 = z$, 
$r_0 = 0$, 
$k = 0$, $\mathcal{O}_{-1}=\mathbb{R}^n$,
$m_{0} = 0$, $s_{-1} = 0$, $\delta > 0$, $\overline{m}>0$
\WHILE {$s_{k-1} = 0$}
\STATE $s_k = 1$ \qquad\% binary stopping variable
\IF{$(\exists\,i\in I)$ $f_i(x_{k}) > 0$}
\IF{$m_{k}\ge\overline{m}$}
\IF {$J(x_k)\textcolor{black}{> } J(x_{r_k})+\delta$} \label{alg:line:ifrestart}
\STATE $r_{k+1}=k+1$\label{alg:line:updater}
\STATE Build $\mathcal{O}_k$  as in \eqref{iteration:restart} \qquad\% $k+1$ is restart iteration
\STATE $s_k = 0$
\ENDIF
\ELSE
\STATE $r_{k+1}=r_k$
\STATE Build $\mathcal{O}_k$ as in  \eqref{e:recurOk} \quad\qquad\% $k+1$ is cumulative iteration.
\STATE $s_k=0$ 
\ENDIF
\IF {$s_k = 0$}
\STATE $x_{k+1} \in  \arg\min
_{x\in \mathcal{O}_k} \; \|x-z\|^{2}$ \qquad\% solution of outer approximation problem \label{alg: warm_rest_line_argmin}
\ENDIF
\ENDIF
\STATE $k\leftarrow k+1$.
\ENDWHILE
\end{algorithmic}
\end{algorithm}

Algorithm \ref{alg: warm_rest} generates a sequence $(x_k)_{k\in \mathbb{L}}$
where  $\mathbb{L}\subseteq \mathbb{N}$ is the set of running iterations.
Let $\mathbb{K}$ be the set of restart iterations, defined as
\begin{equation}\label{set:K}
\mathbb{K}=
\{k\in \mathbb{L}\setminus\{0\} \mid r_{k} \neq r_{k-1}\}.
\end{equation}
So basically $\mathbb{K}$ is the set of iteration numbers for which
line \ref{alg:line:updater} in Algorithm \ref{alg: warm_rest} is performed.
We are introducing $\mathbb{L}\subset \mathbb{N}$, instead of just writing  $k\in \mathbb{N}$ because Algorithm \ref{alg: warm_rest} may stop in a finite number of steps (always for $\delta>0$, as shown in the next proposition).

\begin{example}
Suppose that in Algorithm \ref{alg: warm_rest} we have $\mathbb{L}=\{1,\dots,8\}$ and the restart iteration are: 3 and 6, i.e. $\mathbb{K}=\{3,6\}$. Then
$r_0=r_1=r_2=0$ and then at iteration $3$ restart is performed, therefore $r_3=3$, then $r_4=r_5=3$ and  iteration $6$ is a restart iteration, hence $r_6=6$ and $r_7=r_8=6$. 

\end{example}

Let $\mathcal{S}_{\A}$ be solution set of problem \eqref{problem}. We have the following properties of Algorithm \ref{alg: warm_rest}.
\begin{proposition}\label{prop:warmbis} Consider Problem \eqref{problem}.
Suppose that Assumption \ref{a:compact} holds. 
Let $(\mathcal{O}_{k-1},x_k)_{k\in  \mathbb{L}}$ be generated by Algorithm \ref{alg: warm_rest}.
\begin{enumerate}
    \item\label{prop:warmi} 
    For every $k\in \mathbb{L}$, we have
    $\mathcal{O}_{k-1} \subseteq \mathcal{O}_{k-2} \subseteq \ldots \subseteq \mathcal{O}_{r_{k}-1}$.
    \item\label{prop:warmii}  For every $k\in \mathbb{L}$,
    $
    \mathcal{A}\subseteq \mathcal{O}_{k-1}$.
    \item\label{prop:warm3} $(J(x_k))_{k\in \mathbb{L}}$ is a nondecreasing sequence bounded from above
    by $J^*$ and $(J(x_k))_{k\in \mathbb{K}}$ is an increasing sequence.
    \item\label{prop:warmiii}  For every $k\in \mathbb{L}$, $I(x_k) = \varnothing$ if and only if $x_k \in \mathcal{S}_{\mathcal{A}}$.
    \item\label{prop:warm4} If $\delta > 0$, then
    \begin{enumerate}
    \item for every $k\in \mathbb{L}$, the number of constraints in  $\mathcal{O}_{k-1}$ never exceeds $\overline{m}$;
    \item there exists ${k^*}\in \mathbb{N}$ such that $s_{{k^*}} = 1$ (so, the algorithm "stops in a finite number of steps");
    \item either 
    $x_{{k^*}} \in \mathcal{S}_{\mathcal{A}}$, or 
    the resulting lower approximation to $J^*$ is
    $J(x_{{k^*}})$.
    \end{enumerate}

\end{enumerate}
\end{proposition}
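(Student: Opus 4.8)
The plan is to run a single induction on the iteration index $k \in \mathbb{L}$, exploiting throughout the dichotomy that each running step is either \emph{cumulative} (so $\mathcal{O}_k$ is $\mathcal{O}_{k-1}$ intersected with the fresh quadratic cuts, as in \eqref{e:recurOk}) or a \emph{restart} (so $\mathcal{O}_k$ is rebuilt from scratch via \eqref{iteration:restart}). The base of the induction is $\mathcal{O}_{-1} = \mathbb{R}^n \supseteq \mathcal{A}$. For item~\ref{prop:warmi} I would use that, by definition of $r_k$ as the index of the last restart, every step with index in $\{r_k,\ldots,k-1\}$ is cumulative; since a cumulative step gives $\mathcal{O}_\ell \subseteq \mathcal{O}_{\ell-1}$, chaining these inclusions yields $\mathcal{O}_{k-1} \subseteq \cdots \subseteq \mathcal{O}_{r_k-1}$.

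For item~\ref{prop:warmii} I would propagate $\mathcal{A}\subseteq \mathcal{O}_{k-1}$ through the induction. On a cumulative step the new cuts $L_i(\varphi_i^\ell,x_\ell)(\cdot)\le 0$ are satisfied on $\mathcal{A}$ by Lemma~\ref{lem: out set}\eqref{lem: out setii}, so the intersection preserves the inclusion. On a restart step with index $\ell$ the only extra requirement is that the added constraint $J(\cdot)\ge J(x_\ell)$ hold on $\mathcal{A}$; this is exactly where the construction is used, since $x_\ell$ minimizes $J$ over $\mathcal{O}_{\ell-1}\supseteq \mathcal{A}$, whence $J(x_\ell)\le J^* \le J(x)$ for every $x\in\mathcal{A}$.

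Items~\ref{prop:warm3} and~\ref{prop:warmiii} then follow quickly. Boundedness $J(x_k)\le J^*$ is immediate from item~\ref{prop:warmii}, as $J(x_k)=\min_{\mathcal{O}_{k-1}}J \le \min_{\mathcal{A}}J = J^*$; monotonicity holds because a cumulative step raises the minimized value through $\mathcal{O}_k\subseteq \mathcal{O}_{k-1}$, while a restart step imposes $J(\cdot)\ge J(x_k)$ and hence $J(x_{k+1})\ge J(x_k)$. The quantitative refinement is the strict jump along $\mathbb{K}$: for two consecutive restart indices the restart test $J(x_k)>J(x_{r_k})+\delta$ combined with this monotonicity yields an increment exceeding $\delta$. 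Item~\ref{prop:warmiii} then mirrors Lemma~\ref{lem: alg1}: $I(x_k)=\varnothing \Leftrightarrow x_k\in\mathcal{A}$, and together with $J(x_k)\le J^*$ this is equivalent to $x_k\in\mathcal{S}_{\mathcal{A}}$.

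For item~\ref{prop:warm4}, part (a) follows from the bookkeeping of $m_k$: a cumulative step is taken only while the projected count stays below $\overline{m}$, and a restart resets the count to the small fresh value, so no stored $\mathcal{O}_{k-1}$ exceeds $\overline{m}$ constraints. Part (b) I would prove by contradiction: if $s_k=0$ for all $k$, then every iterate is infeasible and each cumulative step strictly increases $m_k$, so after finitely many of them the cap $m_k\ge\overline{m}$ is reached, forcing (in the absence of a stop) a restart; hence $\mathbb{K}$ would be infinite, and the $\delta$-jumps would drive $J(x_k)\to+\infty$ along $\mathbb{K}$, contradicting $J(x_k)\le J^*$. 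Thus some $s_{k^*}=1$, and part (c) splits into the two ways the loop can halt: either $I(x_{k^*})=\varnothing$, whence $x_{k^*}\in\mathcal{S}_{\mathcal{A}}$ by item~\ref{prop:warmiii}, or $I(x_{k^*})\ne\varnothing$ with $m_{k^*}\ge\overline{m}$ and $J(x_{k^*})\le J(x_{r_{k^*}})+\delta$, in which case the returned lower bound on $J^*$ is $J(x_{k^*})$. I expect the principal difficulty to lie in part (b): one must reconcile the ceiling $\overline{m}$, which forces restarts to recur, with the floor $\delta$ on each cost jump, which caps the number of restarts against the bound $J^*$ -- it is precisely this tension, rather than mere monotonicity, that delivers finite termination, and keeping the index bookkeeping of $r_k$ and $\mathbb{K}$ fully rigorous is the main care required.
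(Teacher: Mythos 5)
Your proposal is correct and follows essentially the same route as the paper's proof: the same cumulative/restart case analysis and induction for the inclusions and for $\mathcal{A}\subseteq\mathcal{O}_{k-1}$, the same monotonicity-plus-$\delta$-jump argument for the cost sequence, and for finite termination the same two ingredients, namely that the $\delta$-jumps bounded by $J^*$ force the set $\mathbb{K}$ of restarts to be finite while the growing constraint count forces the cap $\overline{m}$ to be hit. The only cosmetic difference is that you organize termination as a proof by contradiction (infinitely many restarts would drive $J(x_k)$ above $J^*$), whereas the paper argues directly that $\mathbb{K}$ is finite and then that after the last restart the accumulation of constraints triggers the stop.
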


\begin{proof}\ 
\begin{enumerate} 
    \item These inclusions follow from the fact that, between two consecutive restarts, Algorithm~\ref{alg: warm_rest} accumulates constraints.
    \item Let us show the property by induction. It is obviously satisfied when $k=0$. Assume that $k+1\in \mathbb{L}$ and that the property is satisfied at index $k$ and let $x\in \mathcal{A}$. Two cases may arise. 
    If $\mathcal{O}_k$ is built according to \eqref{e:recurOk}, since 
    $x\in \mathcal{O}_{k-1}$ and 
    \begin{equation}\label{e:xbarOkk}
    x \in \mathcal{A}\subseteq  \{ y \in \mathbb{R}^n \mid 
    (\forall i \in I(x_k))\;
    L_i(\varphi_i^k, x_k)(y) \le 0\},
    \end{equation}
    we deduce that $x \in \mathcal{O}_k$.
    When $\mathcal{O}_k$ is built by \eqref{iteration:restart},
    since $x_k$ minimizes $J$ on $\mathcal{O}_{k-1}$ and $x\in \mathcal{O}_{k-1}$, $J(x) \ge J(x_k)$. Combining this inequality with
    \eqref{e:xbarOkk} and \eqref{iteration:restart} shows that 
    $x\in \mathcal{O}_k$.
    \item     In a cumulative step, the non-decreasing property follows from
    \eqref{prop:warmi}. In turn, the last inequality constraint in  \eqref{iteration:restart} ensures that $J(x_{k+1})\geq J(x_k)$
in a restart iteration.
    The fact that, for every $k\in \mathbb{L}$, $J(x_k)\le J^*$ is a consequence of
    \eqref{prop:warmii}.  Let $(k_1,k_2)\in \mathbb{K}^2$ be such that
    $k_1= \max\{k\in \mathbb{K}\mid k<k_2\}$. Then
    $J(x_{k_2}) >  J(x_{k_1})+\delta$.
    This shows that $(J(x_k))_{k\in \mathbb{K}}$ is an increasing sequence with jumps greater than $\delta$.
    \item\label{prop:warmiiii} $I(x_k) = \varnothing$ $\Leftrightarrow$ $x_k\in \mathcal{A}$.
    Since  $J(x_k)\le J^*$, this is also
    equivalent to $x_k \in \mathcal{S}_{\A}$.
    \item
    \begin{enumerate}
\item This follows from the construction of  Algorithm \ref{alg: warm_rest}. 

\item  The algorithm stops at iteration $k+1$ if $s_k =1$,
since we exit from the while loop.

Two situation may appear when running Algorithm \ref{alg: warm_rest}. 
\begin{enumerate}[start=1,label={\arabic*}.]
    \item\label{case:1_globmin1} 
    There exist $k^*\in \mathbb{L}$ such that $I(x_{k^*})=\varnothing$.
    \item\label{case:2_globmin1} For any $k\in \mathbb{L}$, $I(x_{k})\neq \varnothing$.
\end{enumerate}
In case \ref{case:1_globmin1}, by (iv), $x_{k^*}\in \mathcal{S}_{\A}$ and, when iteration $k^*+1$ starts, $s_{k^*}=1$.
Let us show that, in case \ref{case:2_globmin1}, the algorithm also terminates in a finite number of steps.

We will show that the number of restart iterations is finite.

By (iii), for every $(k_1,k_2) \in \mathbb{K}^2$ with $k_2>k_1$,
$$ J(x_{k_2})>J(x_{k_1})+\delta$$
We deduce that, for every $n\in \mathbb{N}$,  $(k_1,k_2,\dots,k_n) \in \mathbb{K}^n$ with $k_n>k_{n-1}>\dots>k_1$, we have
$$J(x_{k_n})\ge J(x_{k_1})+(n-1)\delta
.$$

However, $J(x_{k_n})\leq J^*<+\infty$. Therefore, the set $\mathbb{K}$ must be finite and 
the following holds:
\begin{equation}
    \label{finite_restart}
    (\exists\,\overline{k}\in \mathbb{K})
    (\forall k \in \mathbb{L} \cap [\overline{k},+\infty[)\quad r_k= \overline{k}.
\end{equation}

This means that, for every $k\in \mathbb{L} \cap [\overline{k},+\infty[$ the set $\mathcal{O}_{k}$ can be built only as in  \eqref{e:recurOk} and the number of constraints $m_k$ will be increasing for $k\geq \bar{k}$.
Hence, there exists $k^*\ge \bar{k}$ such that $m_{k^*}\ge \overline{m}$. As no restart occurs,
$s_{k^*}=1$ and 
Algorithm \ref{alg: warm_rest} stops at iteration $k^*+1$. 
\item 
The proof follows from \eqref{prop:warm3} and \eqref{prop:warmiii}.

\end{enumerate}
    \end{enumerate}
    \end{proof}

    \begin{remark}
   
    The same conclusion as in Proposition \ref{prop:warmbis}\eqref{prop:warm4} holds
    if $\delta$ in Algorithm \ref{alg: warm_rest} is replaced by
    $\delta_k = \overline{\delta}(r_k+\epsilon)^{-1}$ with 
    $\overline{\delta}> 0$ and
    $\epsilon > 0$.
    \end{remark}

{\color{black}
\subsection{Outcome of Algorithm \ref{alg: warm_rest} Cutting spheres with warm restarts}

According to Proposition~\ref{prop:warmbis}, for any $\delta>0$, Algorithm \ref{alg: warm_rest} 
stops in a finite number of $k$ steps with either (a) $x_{k}$ being feasible or (b) $x_{k}$ being infeasible. Case (a) implies that the original problem \eqref{problem} is solvable in a finite number of steps (by arguments analogous to those in Lemma \ref{lem: alg1}).

On the other hand, in case (b), a natural and interesting question is how far  from the feasibility or equivalently, how far from the optimality  is the outcome $x_{k}$. An answer to this question can be provided by investigating the so-called {\em global error bound} property as explained below.
 
 Recall that the feasible set $\A\subset\mathbb{R}^{n}$ of Problem \eqref{problem} has the form \eqref{e:defA}.

We say that $\A$ enjoys the {\em global error bound property} if there exists a constant $\tau>0$ such that
\begin{equation} 
\label{eq:error_bound} 
(\forall z \in \mathbb{R}^{n})\quad 
d(z,\A)\le \tau\max_{1\le i\le m}(f_{i}(z))_{+}\ ,
\end{equation}
where $d(z,\A)=\inf_{a\in\A}\|z-a\|$ and, for every $i\in \{1,\ldots,m\}$, $(f_{i}(z))_{+}:=\max\{0,f_{i}(z)\}$.
 A lot of research has been devoted to necessary/sufficient conditions ensuring \eqref{eq:error_bound}  in the case when $m=1$ and $f_1$ is convex, see e.g. \cite{cuong2022error, fabian2010error}. 
 For $m> 1$ and $(f_{i})_{1\le i \le m}$ convex,  sufficient conditions can be found in \cite{lewis1998error}.
 
 It is not our aim here to provide a detailed analysis of the error bound inequality \eqref{eq:error_bound} when, for every $i\in I$, $f_{i}$ is $\rho_i$-weakly convex. 
 Instead, we observe that sufficient (and necessary) conditions ensuring the error bound inequality for  convex functions $(f_{i})_{1\le i \le m}$ almost straightforwardly lead to conditions ensuring the error bound property for weakly convex functions.
 
 For a given $i\in I$, $f_i$ is proper, lsc, and $\rho_i$-weakly convex with $\rho_i \in \mathbb{R}_+$. 
 Then, the function $\tilde{f}_{i}$ defined as 
 $$
 \tilde{f}_{i}:=f_{i}+\frac{\rho_{i}}{2}\|\cdot\|^{2},\ 
 $$
 is proper, lsc, and convex.

Let
$$
\widetilde{\A}=\{x\in \mathbb{R}^n\mid (\forall i \in I)\, \tilde{f}_{i}(x)\le 0\}.
$$ 
Clearly,
\begin{equation} 
\label{eq:inclusion1} 
\widetilde{A}\subset\A.
    \end{equation}
Hence, by \cite[Proposition 8, pp. 105-106]{lewis1998error}, we obtain the following fact.

\begin{proposition}
\label{prop_8_Lewis_pang} 
Assume that
$$ 
\operatorname{int}\widetilde{A}=\{x\in \mathbb{R}^n \ |\ (\forall i \in I)\;\tilde{f}_i(x)<0\}\neq\varnothing 
$$ and there exists a scalar $\kappa>0$ such that
\begin{equation} 
\label{eq:Suff_error_bound} 
(\forall\ \bar{x}\in \operatorname{bd}\widetilde{A})\ \ (\exists\ \bar{a}\in \operatorname{int}\widetilde{A})\quad \frac{\|\bar{x}-\bar{a}\|}{\min_{1\le i\le m} -\tilde{f}_i(\bar{a})}\le\kappa.
\end{equation}
Then there exists a constant $\tau>0$ such that
\begin{equation} 
\label{eq:Suff_error_bound1}
(\forall x \in \mathbb{R}^n)\quad
\operatorname{dist}(x,\A)\le \operatorname{dist}(x,\widetilde{A})\le\tau\max_{1\le i\le m}(\tilde{f}_i(x))_{+}.
\end{equation}
\end{proposition}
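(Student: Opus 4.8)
The plan is to decompose the double inequality \eqref{eq:Suff_error_bound1} into its two parts and treat them separately: the left inequality is elementary, while the right one is a direct transcription of the cited convex error-bound result. First I would establish $\operatorname{dist}(x,\A)\le\operatorname{dist}(x,\widetilde{A})$ for every $x\in\mathbb{R}^n$. This is immediate from the inclusion \eqref{eq:inclusion1}: since $\widetilde{A}\subseteq\A$, the infimum defining $\operatorname{dist}(x,\A)=\inf_{a\in\A}\|x-a\|$ ranges over a larger set than the one defining $\operatorname{dist}(x,\widetilde{A})$, so it can only be smaller. No hypotheses beyond the inclusion are needed for this step.

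For the right inequality I would invoke \cite[Proposition 8]{lewis1998error}, which is stated for finite systems of convex inequalities. The crucial observation is that each $\tilde{f}_i=f_i+\tfrac{\rho_i}{2}\|\cdot\|^2$ is proper, lsc, and convex (as recalled just before the statement), so the system $\{\tilde{f}_i\le 0\}_{i\in I}$ defining $\widetilde{A}$ falls exactly within the scope of that result. The two standing hypotheses we assume — that $\operatorname{int}\widetilde{A}=\{x\mid(\forall i\in I)\,\tilde{f}_i(x)<0\}$ is nonempty (a Slater-type condition) and that the bounded-ratio condition \eqref{eq:Suff_error_bound} holds — are precisely the two hypotheses under which Lewis and Pang produce a constant $\tau>0$ with $\operatorname{dist}(x,\widetilde{A})\le\tau\max_{1\le i\le m}(\tilde{f}_i(x))_+$ for all $x$. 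Concatenating this estimate with the left inequality yields \eqref{eq:Suff_error_bound1}.

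The main obstacle — and essentially the only nonroutine point — is to confirm that our two assumptions correspond exactly to those required in \cite[Proposition 8]{lewis1998error}. The Slater-type assumption is phrased here through the interior identity $\operatorname{int}\widetilde{A}=\{x\mid(\forall i\in I)\,\tilde{f}_i(x)<0\}$, so one should check that the cited result uses the same normalization (this identity is consistent with the continuity of the convex functions $\tilde{f}_i$ on the interior of their common domain). Likewise, the geometric ratio in \eqref{eq:Suff_error_bound}, comparing the distance from a boundary point $\bar{x}$ to a strictly feasible point $\bar{a}$ against the margin $\min_{1\le i\le m}-\tilde{f}_i(\bar{a})$, must be recognized as exactly the quantity controlling the error-bound constant in the convex framework of Lewis and Pang. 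Once this correspondence is made explicit, the constant $\tau>0$ is furnished directly by their proposition, and no additional estimate is required.
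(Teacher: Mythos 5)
Your proposal is correct and matches the paper's treatment: the paper derives this proposition directly from the inclusion $\widetilde{A}\subset\A$ (which gives the elementary left inequality exactly as you argue) together with a citation of \cite[Proposition 8, pp.\ 105--106]{lewis1998error} applied to the convex system $\{\tilde{f}_i\le 0\}_{i\in I}$, whose hypotheses are precisely the Slater-type interior condition and the bounded-ratio condition \eqref{eq:Suff_error_bound}. Your only addition is to make the routine verification of this correspondence explicit, which the paper leaves implicit.
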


As a consequence of this result, if 
$\text{int\,}\widetilde{A}\neq\varnothing$,
 \eqref{eq:Suff_error_bound}  holds and,for every $i\in I(x_{k})$,
$f_{i}(x_{k})>0$  then, by \eqref{eq:Suff_error_bound1},
$$
\operatorname{dist}(x_{k},\A)\le \tau\max_{i\in I(x_{k})}(f_{i}(x_{k})+\frac{\rho_{i}}{2}\|x_{k}\|^{2}),
$$
}

\section{Hidden convexity of
the quadratic outer approximation problem}\label{sec:hidden_conv}

In this section we investigate more closely the problem of  outer approximation. At any iteration $k\in \mathbb{N}$ of our proposed algorithms, the outer approximation problem for \eqref{problem}, introduced in Definition \ref{def: out approx}, is of the form:
\begin{equation}
    \label{outerproblem}
    \tag{OP$_k$}
    \minimize{x\in \OAP_k} \|x-z\|^2,
\end{equation}
where, 
\begin{equation} 
\label{oaaset0}
\OAP_k =\{x\in \mathbb{R}^n\mid (\forall i \in \{1,\ldots,m_k\})\; q_i^k(x) \le 0\}
\end{equation}
with, for every $i\in \{1,\ldots,m_k\}$ and $x\in \mathbb{R}^n$, $q_i^k(x) := -a_{i}\|x\|^2+\langle b_i^k,x\rangle+c_i^k$.
Problem \eqref{outerproblem} belongs to the class of S-QCQP problems studied in \cite{bednarczuk2023global}, where the objective function has the following form:
\begin{equation}
\label{sqcqp_eq}
(\forall x \in \mathbb{R}^n)\quad q_0(x):= a_0\|x\|^2+\langle b_0, x\rangle +c_0
\end{equation}

with $a_0=1$, $b_0 = 2 z$, and $c_0=\|z\|^2$.

As already noticed, there always exists an optimal solution $x^*$ for \eqref{outerproblem}, since $q_0$ is coercive, and $\OAP_k$ is nonempty and closed.

\subsection{Hidden convexity under dimension condition}

In this subsection we recall that KKT conditions for \eqref{outerproblem} are necessary and sufficient for optimality under the following condition.

\begin{condition}(Dimension condition)\label{cond:number_constr}
The problem \eqref{outerproblem} satisfies the so-called dimension condition if any maximally linearly independent family of vectors extracted from $\{b_i^k \mid i\in \{0,\ldots,m_k\}\}$ has cardinality strictly less than $n$. 
\end{condition}

By \cite[Theorem 4.2]{bednarczuk2023global}, the following result holds.

 \begin{theorem}
\label{sqcqpcharcter}
Consider problem \eqref{outerproblem}. 
Let $x^*$ be 
a feasible point.\\
If there exists a point $\widetilde{x}\in \mathbb{R}^n$ such that 
\begin{equation*} 
(\forall i \in \{1,\ldots,m_k\})\quad 
q_i(\widetilde{x})<0.
\end{equation*}
Then, the conditions

\begin{equation}
    \label{kkt2}
    \tag{KKT}
    \big(\exists (\gamma_1,\ldots ,\gamma_{m_k})\in\mathbb{R}^{m_k}_+\setminus \{0\}\big)\quad
    \begin{cases}
        &(i)\ \ \
        2a_0x^*+b_0+\sum\limits_{k=1}^{m_k} \gamma_i (b_i-2a_ix^*)=0,\\
        &(ii)\ \ \gamma_i q_i(x^*)=0,\ \ i\in \{1,\ldots,m_k\},\\
        &(iii)\ \ \ a_0-\sum\limits_{i=1}^{m_k}\gamma_ia_i\ge0,
    \end{cases}
\end{equation}
are sufficient for global optimality of $x^{*}$.
Moreover, suppose that Condition \ref{cond:number_constr} holds. 
Then, conditions \eqref{kkt2} 
are necessary and sufficient for global optimality of $x^{*}$.
\end{theorem}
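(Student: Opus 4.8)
The plan is to recognize that \eqref{outerproblem} is a separable QCQP (S-QCQP) of exactly the kind studied in \cite{bednarczuk2023global}, so that the statement follows by matching hypotheses and invoking \cite[Theorem 4.2]{bednarczuk2023global}. The only genuine work in this route is a structural verification: every quadratic form appearing in \eqref{oaaset0}--\eqref{sqcqp_eq} has Hessian proportional to the identity, namely $\nabla^2 q_0 = 2a_0 I$ with $a_0 = 1 > 0$ (strictly convex objective) and $\nabla^2 q_i = -2a_i I$ with $a_i > 0$ (reverse-convex constraints, describing complements of balls). First I would check that this is precisely the separable structure assumed in the cited framework, that the Slater point $\widetilde{x}$ matches its regularity assumption, and that Condition \ref{cond:number_constr} corresponds to its rank/dimension hypothesis.

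For the sufficiency part I would give the short self-contained Lagrangian argument, since it needs neither the Slater point nor the dimension condition. Given a feasible $x^*$ and multipliers $(\gamma_i)_{1\le i\le m_k}$ satisfying \eqref{kkt2}, set $L := q_0 + \sum_{i=1}^{m_k}\gamma_i q_i$. The coefficient of $\|x\|^2$ in $L$ is $a_0 - \sum_i \gamma_i a_i$, which is $\ge 0$ by condition (iii) in \eqref{kkt2}, so $L$ is convex; condition (i) in \eqref{kkt2} is exactly $\nabla L(x^*) = 0$, whence $x^*$ globally minimizes the convex function $L$. Then for any feasible $x$ one has $q_0(x) \ge q_0(x) + \sum_i \gamma_i q_i(x) = L(x) \ge L(x^*) = q_0(x^*)$, where the first inequality uses $\gamma_i \ge 0$ together with $q_i(x)\le 0$, and the last equality is complementary slackness, condition (ii) in \eqref{kkt2}. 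This establishes global optimality of $x^*$.

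For the necessity part, assuming $x^*$ is globally optimal, the task is to produce multipliers satisfying \eqref{kkt2}; the delicate requirement is condition (iii) in \eqref{kkt2}, which forces the aggregated Hessian $2(a_0 - \sum_i\gamma_i a_i)I$ to be positive semidefinite. This is the hidden-convexity content: it amounts to showing that the Lagrangian (equivalently SDP) dual of \eqref{outerproblem} has no duality gap and is attained, so that an optimal dual vector yields a convex aggregate Lagrangian minimized at $x^*$. Here the Slater condition supplies strong duality, while Condition \ref{cond:number_constr} on the family $\{b_i^k\}$ is the corank/genericity hypothesis guaranteeing that the SDP relaxation is \emph{exact}, i.e. admits a rank-one optimizer, which is exactly what converts the dual certificate into KKT multipliers for $x^*$. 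I would obtain this by reducing to the canonical form of \cite{bednarczuk2023global} and quoting its Theorem 4.2, after confirming the notational correspondence.

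The step I expect to be the main obstacle is precisely this necessity/exactness argument: proving that, under the dimension condition, the convex relaxation of the nonconvex problem \eqref{outerproblem} is tight and delivers multipliers meeting the semidefiniteness inequality in \eqref{kkt2}. In a from-scratch treatment this is where $S$-lemma-type reasoning, simultaneous diagonalization of the (here already scalar) Hessians, and rank-reduction of an SDP solution would have to be carried out; the reduction to \cite[Theorem 4.2]{bednarczuk2023global} is attractive precisely because it absorbs this difficulty, leaving only the routine verification that \eqref{outerproblem} is an admissible S-QCQP instance.
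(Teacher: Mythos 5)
Your proposal is correct and matches the paper's treatment: the paper establishes this theorem purely by invoking \cite[Theorem 4.2]{bednarczuk2023global}, which is exactly the reduction you describe, and your verification that \eqref{outerproblem} fits the S-QCQP format (scalar Hessians $2a_0 I$ and $-2a_i I$, Slater point, dimension condition) is the only content the paper implicitly relies on. Your additional self-contained Lagrangian argument for the sufficiency half — convexity of $L=q_0+\sum_i\gamma_i q_i$ from (iii), stationarity $\nabla L(x^*)=0$ from (i), and the chain $q_0(x)\ge L(x)\ge L(x^*)=q_0(x^*)$ via complementary slackness (ii) — is correct and is a small bonus that the paper does not spell out.
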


 The dual of the Lagrangian Dual of a general QCQP problem is a SDP (Semi-Definite Program) relaxation of the QCQP, which is a convex program, see \cite{Vanden1}.
When the solution set of QCQP is included in the solution set of this SDP relaxation, we refer to this situation as hidden convexity, see \cite{ben2011hidden,ben1996hidden}.

More precisely, the classical Lagrangian dual of \eqref{outerproblem} is defined as follows:

\begin{align}\label{problem: maxmin_lagr}\tag{LD}
    &\max_{\gamma = (\gamma_i)_{1\le i \le m_k}\geq 0} \min_{x\in \mathbb{R}^n} \left(L(x,\gamma)
    :=\|x-z\|^2 + \sum_{i=1}^{m_k} \gamma_i q_i(x)\right),
\end{align}
which can be rewritten as the following semidefinite program (see, e.g.,   \cite[Section 3]{Vanden1} and \cite[Theorem 1]{bao2011semidefinite}):
\begin{align}\label{lagrangian_dual_eq}
   & \max_{t,\gamma\geq 0} t,\quad \text{s.t.}\ 
      \left[\begin{array}{cc}
        I & -z \\
        -z^\top & \|z\|^2-t  
    \end{array}\right]
     + \sum_{i=1}^{m_k} \gamma_i \left[\begin{array}{cc}
          -a_i I  & b_i  \\
          b_i^\top & c_i  
     \end{array}\right] \succeq 0,
\end{align}
where, for every symmetric matrix $A\in S^n$, the notation $A\succeq0$ means that matrix $A$ is positive semidefinite.

Problem \eqref{lagrangian_dual_eq} is a cone programming problem, and its Lagrangian dual, for a suitably defined Lagrangian (see \cite{shap1}), takes the Shor form:
\begin{equation}\label{problem: sdp}\tag{SDP}
    \min_{X \in S^n, x\in \mathbb{R}^n} \operatorname{tr} (X) - 2 z^\top x{+\|z\|^2},\quad \mbox{s.t.} \
    \begin{cases}
    -a_i\operatorname{tr}(X) + b_i^\top x +c_i \leq 0,\ i\in \{1,\dots,m_k\}\\ X-xx^\top \succeq 0,
    \end{cases}
 \end{equation}

where $\operatorname{tr}$ is the trace operator.

According to \cite[Proposition 2]{bao2011semidefinite} (see also \cite{kim2023equivalent}),  $\operatorname{Opt}\eqref{problem: maxmin_lagr}$ and $\operatorname{Opt}\eqref{problem: sdp}$ are equal if there exists $(X,x)\in S^n\times \mathbb{R}^n$ such that 
\begin{equation}
    \label{slater_SDP_feas}
    (\forall i\in \{1,\ldots,m_k\})\quad 
    {-}a_i\operatorname{tr}(X) + b_i^\top x +c_i < 0.
\end{equation}
Here, $\operatorname{Opt}(\cdot)$ denotes the optimal value of the optimization problem.
Every element $\widetilde{x}$ in the quadratic constraint sets defining \eqref{outerproblem} is such that
$$(\forall i \in \{1,\ldots,m_k\})\quad 
q_i(\widetilde{x})={-}a_i\|\widetilde{x}\|^2 + b_i^\top \widetilde{x} +c_i \le 0.$$
Since the complement of the constraint set of \eqref{outerproblem} is the union of a finite number of open balls in $\mathbb{R}^n$, there exists a point $\widetilde{x}\in \mathbb{R}^n$ such that 
$$(\forall i \in \{1,\ldots,m_k\})\quad {-}a_i\|\widetilde{x}\|^2 + b_i^\top \widetilde{x} +c_i < 0.$$
Then, by setting $X=\widetilde{x}\widetilde{x}^\top$ and $x=\widetilde{x}$,
\eqref{slater_SDP_feas} is satisfied and $\operatorname{Opt}\eqref{problem: maxmin_lagr}=\operatorname{Opt}\eqref{problem: sdp}$.

 In addition, if Condition \ref{cond:number_constr} holds, it follows from
  Theorem \ref{sqcqpcharcter} that the KKT conditions define a saddle point of the Lagrange function. Then strong duality holds and 
  \begin{equation}\label{eq:optimal_equiv}
    \operatorname{Opt}\eqref{problem: sdp}=\operatorname{Opt}(7.3)=\operatorname{Opt}\eqref{outerproblem}.
\end{equation}
Note that every feasible point $x$ for \eqref{outerproblem} is such that
$(X,x) = (xx^\top,x)$ is a feasible point for \eqref{problem: sdp}.
If $x^*$ is a solution to \eqref{outerproblem} , then
\[
\operatorname{Opt}\eqref{outerproblem}=\|x^*-z\|^2  = \operatorname{tr}(X^*)-2z^\top x^*
+\|z\|^2,
\]
where $X^* = x^*(x^*)^\top$. Thus, it follows from \eqref{eq:optimal_equiv}
that $(X^*,x^*)$ is a solution to \eqref{problem: sdp}.
This means that under Condition  \ref{cond:number_constr}, \eqref{outerproblem} is a hidden convex problem.

{\color{black}
\subsection{The feasibility problem of finding a point on sphere and in a polyhedron}
\label{sec: feas quad }
As pointed out in the previous subsection, when Condition \ref{cond:number_constr} holds, the optimal value of problem \eqref{outerproblem} can be found by solving a SDP relaxation.
On the other hand, after finding the optimal value  $\operatorname{Opt}\eqref{outerproblem}$, we still need to recover an optimal solution for \eqref{outerproblem}, since we recall that, when $\operatorname{Opt}\eqref{outerproblem}=\operatorname{Opt}\eqref{problem: sdp}$, the set of solutions to \eqref{outerproblem} is a subset
of the set of solutions to \eqref{problem: sdp}.

Hence, in this subsection, for a given $k\in \mathbb{N}$, we consider the resolution of \eqref{outerproblem} when $\operatorname{Opt}\eqref{outerproblem}$ is known, say equal to some $\alpha_k$. 

Without loss of generality, by shifting the variable, we can assume that $J\colon x \mapsto \|x\|^2$, i.e. $z=0$.
Note that adding the constraint $q_{\alpha_k}(x):=\|x\|^2-\alpha_k= 0$
is equivalent to add the constraint
$q_{\alpha_k}(x)\ge 0$, which shows that it does not affect the 
S-QCQP structure of  Problem~\eqref{outerproblem} and Condition \ref{cond:number_constr}.

Then, we notice that every quadratic constraint of the form $q_i(x)=-a_i\|x\|^2+b_{i,k}^\top x+c_k\le0$, $i\in \{1,\ldots,m_k\}$, of \eqref{outerproblem}, generated by Algorithm \ref{alg: alg1} or  \ref{alg: warm_rest}, intersects the sphere $\mathcal{S}_{\alpha_k}:=\{x\in\mathbb{R}^n\ |\ \|x\|^2-\alpha_k=0\}$.

 We say that the $i$-th constraint is redundant at level $\alpha_k$ if the following holds
\begin{equation*}
    \mathcal{S}_{\alpha_k} \cap \{ x \in \mathbb{R}^n
 \mid -a_i\|x\|^2+b_{i,k}^\top x+c_k\le 0\} = \mathcal{S}_{\alpha_k}
 \end{equation*}

 The following result holds.

\begin{lemma}
\label{lem: from out to feas}

Let $k\in \mathbb{N}$ and let $\alpha_k = \operatorname{Opt}\eqref{outerproblem}$.
Consider the
outer approximation problem \eqref{outerproblem},

with
$$
\mathcal{O}_k:=\{x\in X\mid  
L_i(\varphi_i^k
,x_{k})(x) = -a_i \|x\|^2+b_{i,k}^\top x + c_{i,k}\le 0,\ \ i\in I(x_k)\}\cap\bigcap\limits_{p=r_k}^{k-1}\{\mathcal{O}_p\},
$$
where $r_k$ is the first restart iteration before $k$ (set $r_k=0$ when considering Cutting spheres algorithm \ref{alg: alg1}) and 
the redundant constraints at level $\alpha_k$ can be removed.
This problem is equivalent to  the following auxiliary feasibility problem between a sphere and a convex set:
\begin{equation}
    \label{Feasibility_prob}
    \text{ Find }x\in \mathcal{S}_{\alpha_k}\cap \mathcal{O}_k^{\alpha_k},
\end{equation}
where 
\begin{align*}
    &{S}_{\alpha_k}:=\{x\in\mathbb{R}^n\ |\ \|x\|^2=\alpha_k\}\\
    & \mathcal{O}_k^{\alpha_k}:=\{x\in \mathbb{R}^n\mid  b_{i,k}^\top x\le a_i\alpha_k+c_{i,k},\ \ i\in I(x_k)\}\cap\bigcap\limits_{t=r_k}^{k-1}\{\mathcal{O}_t^{\alpha_k}\}.
\end{align*}
\end{lemma}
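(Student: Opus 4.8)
Following the reduction stated just before the lemma, I would first take $z=0$ without loss of generality, so that the objective is $J\colon x\mapsto \|x\|^2$. The plan is to show that both \eqref{outerproblem} and the feasibility problem \eqref{Feasibility_prob} have \emph{exactly the same solution set}, namely $\mathcal{O}_k\cap \mathcal{S}_{\alpha_k}$. The first step is to characterize the minimizers of \eqref{outerproblem}. Since $\alpha_k=\operatorname{Opt}\eqref{outerproblem}$ and $J(x)=\|x\|^2$, a feasible point $x\in\mathcal{O}_k$ is optimal if and only if $\|x\|^2=\alpha_k$, i.e.\ $x\in \mathcal{S}_{\alpha_k}$. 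Hence the solution set of \eqref{outerproblem} is precisely $\mathcal{O}_k\cap \mathcal{S}_{\alpha_k}$ (the inclusion $\subseteq$ is the statement that every minimizer attains value $\alpha_k$, and $\supseteq$ follows because any feasible point on the sphere attains the optimal value). Note $\alpha_k=\|x^*\|^2\ge 0$ guarantees $\mathcal{S}_{\alpha_k}\neq\varnothing$.

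The core step is the linearization on the sphere. For each $i$ and each level $t\in\{r_k,\ldots,k\}$, the defining inequality of $\mathcal{O}_t$ has the form $-a_i\|x\|^2+b_{i,t}^\top x+c_{i,t}\le 0$ with $a_i>0$ (strict positivity being exactly what makes the cut a genuine sphere, by Lemma~\ref{lem: geo_sphere}). For any $x\in\mathcal{S}_{\alpha_k}$ we may substitute $\|x\|^2=\alpha_k$, which turns this quadratic inequality into the affine inequality that defines the corresponding halfspace in $\mathcal{O}_k^{\alpha_k}$; conversely, for $x\in\mathcal{S}_{\alpha_k}$ the affine inequality substitutes back to the quadratic one. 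This equivalence is an \emph{if and only if} precisely because it is carried out pointwise on the sphere where $\|x\|^2$ is frozen to the constant $\alpha_k$. Performing this substitution simultaneously for every cut at every level $t\in\{r_k,\ldots,k\}$ — which is legitimate since $\mathcal{O}_k$ is the finite intersection $\{L_i\le 0\}_{i\in I(x_k)}\cap\bigcap_{p=r_k}^{k-1}\mathcal{O}_p$ and $\mathcal{O}_k^{\alpha_k}$ is the matching intersection of the linearized sets $\bigcap_{t=r_k}^{k-1}\mathcal{O}_t^{\alpha_k}$ — yields the identity $\mathcal{O}_k\cap \mathcal{S}_{\alpha_k}=\mathcal{O}_k^{\alpha_k}\cap \mathcal{S}_{\alpha_k}$. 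Combining with the first step gives that the solutions of \eqref{outerproblem} coincide with $\mathcal{S}_{\alpha_k}\cap\mathcal{O}_k^{\alpha_k}$, which is the assertion.

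It remains to justify that discarding the constraints that are redundant at level $\alpha_k$ does not alter the feasible set of \eqref{Feasibility_prob}. By definition, a cut is redundant at level $\alpha_k$ when $\mathcal{S}_{\alpha_k}$ is entirely contained in its sublevel set; after linearization this means the associated halfspace contains all of $\mathcal{S}_{\alpha_k}$, so intersecting $\mathcal{S}_{\alpha_k}$ with it leaves $\mathcal{S}_{\alpha_k}$ unchanged and the constraint may be dropped. I expect the main obstacle to be bookkeeping rather than conceptual: one must verify that the recursive linearization is applied \emph{consistently and exhaustively} across all accumulated levels $t\in\{r_k,\ldots,k\}$ (so that no quadratic cut survives un-linearized inside the nested $\mathcal{O}_p$'s), and one must keep straight that the substitution $\|x\|^2=\alpha_k$ is an exact equivalence only on the sphere, not in the ambient space — it is the restriction to $\mathcal{S}_{\alpha_k}$, together with the characterization of minimizers as the norm-$\sqrt{\alpha_k}$ feasible points, that makes the two problems equivalent rather than merely related.
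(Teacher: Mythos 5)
Your proposal is correct and follows essentially the same route as the paper's proof: characterize the optimal solutions of \eqref{outerproblem} as the feasible points with $\|x\|^2=\alpha_k$, then substitute $\|x\|^2=\alpha_k$ to turn each quadratic cut into the corresponding affine inequality, an equivalence valid pointwise on $\mathcal{S}_{\alpha_k}$. The paper states this substitution only for the constraints indexed by $I(x_k)$ and leaves the recursion over the nested sets $\mathcal{O}_p$, $p\in\{r_k,\ldots,k-1\}$, and the treatment of redundant constraints implicit, so your extra bookkeeping is a faithful (indeed slightly more careful) elaboration of the same argument rather than a different one.
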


\begin{proof}
Since $\operatorname{Opt}\eqref{outerproblem}=\alpha_k$, $x$ is an optimal solution to Problem \eqref{outerproblem} if and only if it satisfies $\alpha_k=\|x\|^2$ and $x \in \mathcal{O}_k$. Since, for every $i\in I(x_k)$,
$$
 L_i(\varphi_i^k
,x_{k})(x)=b_{i,{k}}^\top x +a_i\|x\|^2+c_{i,k}=b_{i,{k+1}}^\top x+a_i\alpha_k+c_{i,k}.
$$
this is also equivalent to $x\in \mathcal{S}_k\cap \mathcal{O}_k^{\alpha_k}$.
\end{proof}

\begin{remark}
\label{rem: out feas}
Conversely to Lemma \ref{lem: from out to feas}, if Problem \eqref{Feasibility_prob} has no solution for some $\alpha_k\ge0$, it is not equivalent to \eqref{outerproblem}, which always has a global solution, as already mentioned st the beginning of these section.
Then, by contraposition of Lemma \ref{lem: from out to feas}, it holds $\alpha_k \neq \operatorname{Opt}\eqref{outerproblem}$.
\end{remark}

In view of Remark \ref{rem: out feas}, solving \eqref{outerproblem} reduces to finding the intersection of a sphere and the polyhedron $\OAP_k^{\alpha_k}$.
The Inexact cutting sphere algorithm \ref{alg: inexact} formulated in the next section, takes advantage of Lemma \ref{lem: from out to feas} and Remark \ref{rem: out feas} for solving  \eqref{outerproblem}.

Problem \ref{Feasibility_prob} can be solved, for example by finding a point $\hat{x}_1\in \OAP_k^{\alpha_k}\cap \{x\in \mathbb{R}^n\ |\ \|x\|^2\le \alpha_k\}$  and a point $\hat{x}_2\in\OAP_k^{\alpha_k}\cap \{x\in \mathbb{R}^n\ |\ \|x\|^2\ge \alpha_k\}$, provided that they exist.
Then, since $\OAP_k^{\alpha_k}$ is a convex set, the segment connecting $\hat{x}_1$ and $\hat{x}_2$ will intersect the sphere $\mathcal{S}_{\alpha_k}$ in a point $\hat{x}_3\in \OAP_k^{\alpha_k}$ (hence $x_3$ is a solution to Problem \ref{Feasibility_prob}).
The point $\hat{x}_1$ can be found by solving the convex problem $$\minimize{x\in \OAP_k^{\alpha_k}}\ \ \|x\|^2,$$
while $\hat{x}_2$ can be found by solving the following concave optimization problem.
\begin{equation}
    \label{concave prob}
    \maximize{x\in \OAP_k^{\alpha_k}}\ \ \|x\|^2.
\end{equation}

Problem \eqref{concave prob} is NP-Hard, but it is intensively discussed in the literature, see e.g. \cite{chazelle1993optimal,khachiyan2009generating,pardalos1986methods}. (In the case when $\OAP_k^{\alpha_k}$ is unbounded, 
we can stop the optimization procedure at any point $\hat{x}_2\in \OAP_k^{\alpha_k}$ such that
$\|\hat{x}_2\|^2 \ge \alpha_k$.)

}

\section{Inexact cutting sphere algorithm}

In this section, we propose a modification of the Warm restart outer approximation algorithm~\ref{alg: warm_rest}. 
At each iteration $k$, we solve the outer approximation problem \ref{outerproblem} 

by leveraging  Lemma \ref{lem: from out to feas} and Remark \ref{rem: out feas}.

The outer approximation Algorithm \ref{alg: inexact} described in this section finds an $\varepsilon$-solution to Problem \eqref{problem}, for an arbitrary $\varepsilon>0$.
The concept of $\varepsilon$-solution is defined as follows.

\begin{definition}
\label{def:eps_sol}
Let $J^*$ be the optimal value of a general optimization problem 
\begin{equation}
\label{prog: general}
\tag{GP}
    \minimize{x\in \mathcal{C}}J(x) 
\end{equation}
where $\mathcal{C}\subset \X$ is closed and $J:\X\rightarrow (-\infty, +\infty]$ is proper. 
$x^\varepsilon\in \X$ is an $\varepsilon$-solution to Problem~\eqref{problem} with $\varepsilon\ge0$ if and only if
\begin{equation}
    \label{eq:eps_sol}
    x^\varepsilon\in \{x\in \mathcal{C}\ |\ J(x)\le J^*+\varepsilon\}.
\end{equation}
\end{definition}

In our context, the objective function is $J=\|\cdot\|^2$ (i.e., $z$ is assumed to be equal to 0, by simple translation of the sought variable).

We rewrite Problem \eqref{outerproblem} as the feasibility problem \eqref{Feasibility_prob} by choosing $\alpha_k=J(x_k)$.
By Lemma~\ref{lem: from out to feas}, if $\alpha_k = \operatorname{Opt}\eqref{outerproblem}$, a solution to the feasibility problem \eqref{Feasibility_prob} exists which is a global solution to 
Problem \eqref{outerproblem}.
In contrast, by Remark \ref{rem: out feas}, if the feasibility problem \eqref{Feasibility_prob} does not have any solution, the optimal value of Problem \eqref{outerproblem} is $\operatorname{Opt}\eqref{outerproblem}\neq\alpha_k$.
This analysis is refined in Lemma \ref{lem: sub1 outcome} hereafter.
In the latter case, we perform a restart iteration.

In the proposed algorithm, Subroutine \ref{alg: outsolver}
will play a prominent role.

\makeatletter
\renewcommand*{\ALG@name}{Subroutine}
\makeatother
\setcounter{algorithm}{0}

\begin{algorithm}[H]
\caption{Outer problem solver for chosen $\varepsilon>0$.}
\label{alg: outsolver}
\begin{itemize}
    \item Let $\alpha_k=J(x_k)$.
    Let $\mathcal{S}_{\alpha_k}$ be the sphere centered at zero with radius $\alpha_k$ and let the polyhedron $\mathcal{O}_k^{\alpha_k}$ be defined as in  Lemma \ref{lem: from out to feas}.
     \bigskip
\begin{enumerate}[start=1,label={\arabic*:}]

    \item  
    If $\mathcal{S}_{\alpha_k}\cap\mathcal{O}_k^{\alpha_k}\neq \varnothing$, set $x_{k+1}\in \mathcal{S}_{\alpha_k}\cap\mathcal{O}_k^{\alpha_k}$.
  
    \bigskip
    \item  Else take any point ${x_{k+1}\in \{x\in \mathbb{R}^n\ |\ J(x)=\alpha_k+\varepsilon \}}$. 
  
      \bigskip
    \item  \textbf{Return}  $x_{k+1}$.
    \end{enumerate}
\end{itemize}
\end{algorithm}

Let $m_k$ be the cost of performing Subroutine \ref{alg: outsolver}. The cost $m_k$ is set before we run Subroutine \ref{alg: outsolver} (for example $m_k$ can be the number of quadratic constraints in \eqref{outerproblem}).
Similarly to Inexact cutting spheres Algorithm \ref{alg: warm_rest}, we
define an acceptable upper bound  $\bar{m}$ in terms of complexity.

\makeatletter
\renewcommand*{\ALG@name}{Algorithm}
\makeatother
\setcounter{algorithm}{2}

\begin{algorithm}[H]
\caption{Inexact cutting spheres}\label{alg: inexact}
\begin{algorithmic}[1] 
\ENSURE {$x_0 = 0$, $r_0 = 0$, $k = 0$, $\mathcal{O}_{-1}=
\mathbb{R}^n$, $m_{0} = 0$, $\varepsilon>0$.}
\LOOP
\IF{$I(x_k)\not =  \varnothing$ and {$m_{k}\le \bar{m}$}}  \label{cond2_inexact}
\STATE Build $\mathcal{O}_k$ as in  \eqref{e:recurOk}
\label{line4alg3}
\STATE Apply Subroutine \ref{alg: outsolver} with $\varepsilon$ to find $x_{k+1}$
\IF{$x_{k+1}$ was computed with Subroutine \ref{alg: outsolver}.2}
\STATE Set $r_{k+1}=k+1$. 
\STATE Redefine $\mathcal{O}_k= \{ x \in \mathbb{R}^n \mid \|x\| \geq \|x_{k+1}\|\ \}$ \label{inexact: inexact restart} \quad\% $k+1$ is a restart iteration
\label{alg3 step: redefine}
\ELSE
\STATE Set $r_{k+1}=r_k$\quad\% $k+1$ is a cumulative iteration
\ENDIF
\ELSE 
\STATE \textbf{exit loop}
\ENDIF
\STATE $k\leftarrow k+1$.
\ENDLOOP
\end{algorithmic}
\end{algorithm}

\makeatletter
\renewcommand*{\ALG@name}{Subroutine}
\makeatother
\setcounter{algorithm}{1}

Let $\mathbb{K}$ be the set of restart iterations, defined as in \eqref{set:K}.
So $\mathbb{K}$ is the set of iteration numbers for which line \ref{inexact: inexact restart} in Algorithm \ref{alg: inexact} is performed.
The following assumption is needed in order for Inexact cutting spheres Algorithm \ref{alg: inexact} to be well defined.

\begin{assumption}\label{assumption:epsilon}
The parameter $\varepsilon>0$ is such that for any $\varepsilon_1\in (0,\varepsilon)$  $\A\cap  \operatorname{lev}_{=J^*+\varepsilon_1}J\neq\varnothing
$, where $\operatorname{lev}_{=J^*+\varepsilon_1}J:=\{x \in \mathbb{R}^n \ |\ J(x)= J^*+\varepsilon_1 \}$.
\end{assumption}

Similarly to Algorithm \ref{alg: warm_rest}, Algorithm \ref{alg: inexact} generates a sequence $(x_k)_{k\in \mathbb{L}}$
where we denote with  $\mathbb{L}\subset \mathbb{N}$ the set of running iterations.
The following lemma describes the possible outcomes of Subroutine \ref{alg: outsolver}.

\begin{lemma}
\label{lem: sub1 outcome}
Let $k\in \mathbb{L}$, let $\alpha_k=J(x_k)$, and 
let $J_{k+1}$ be 
 the optimal value  of  the outer approximation problem \eqref{outerproblem}.

When Subroutine \ref{alg: outsolver}.1 is performed, 
$x_{k+1}$ is a global solution to \eqref{outerproblem} and $J_{k+1}=\alpha_k$.

When Subroutine \ref{alg: outsolver}.2 is performed, then 
$J_{k+1}> \alpha_k$.
\end{lemma}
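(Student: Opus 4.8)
The plan is to reduce both assertions to the single inequality $J_{k+1}\ge \alpha_k$ and then to read off the two cases from the sphere-versus-polyhedron reformulation of \eqref{outerproblem}. First I would record the algebraic identity underlying Lemma~\ref{lem: from out to feas}: on the sphere $\mathcal{S}_{\alpha_k}=\{x\in\mathbb{R}^n\mid \|x\|^2=\alpha_k\}$ every constraint $-a_i\|x\|^2+b_{i,k}^\top x+c_{i,k}\le 0$ collapses to the affine constraint $b_{i,k}^\top x\le a_i\alpha_k+c_{i,k}$, so that $\mathcal{S}_{\alpha_k}\cap\OAP_k=\mathcal{S}_{\alpha_k}\cap\mathcal{O}_k^{\alpha_k}$ holds for \emph{every} level $\alpha_k\ge 0$, not only when $\alpha_k=\operatorname{Opt}\eqref{outerproblem}$. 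In particular $\mathcal{S}_{\alpha_k}\cap\mathcal{O}_k^{\alpha_k}\ne\varnothing$ if and only if $\OAP_k$ contains a point $x$ with $J(x)=\alpha_k$.

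The core step is to prove $J_{k+1}=\min_{x\in\OAP_k}J(x)\ge \alpha_k=J(x_k)$. This does not follow from Lemma~\ref{lem: from out to feas} alone, and I would obtain it by induction over the running iterations, carrying the invariant $\min_{x\in\OAP_{k-1}}J(x)=\alpha_k$, where $\OAP_{k-1}$ denotes the set as finalized at the end of iteration $k-1$ (after the possible redefinition on line~\ref{inexact: inexact restart}). The base case $k=0$ holds because $\OAP_{-1}=\mathbb{R}^n$, $x_0=0$ and $\alpha_0=J(0)=0$. For the inductive step, since $\OAP_k$ is built cumulatively through \eqref{e:recurOk} one has $\OAP_k\subseteq\OAP_{k-1}$, whence $J_{k+1}\ge\min_{\OAP_{k-1}}J=\alpha_k$ by the inductive hypothesis.

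With $J_{k+1}\ge\alpha_k$ in hand, both cases drop out. If Subroutine~\ref{alg: outsolver}.1 is performed then $\mathcal{S}_{\alpha_k}\cap\mathcal{O}_k^{\alpha_k}\ne\varnothing$, so by the identity $x_{k+1}\in\OAP_k$ with $J(x_{k+1})=\alpha_k$; hence $J_{k+1}\le\alpha_k$, and together with $J_{k+1}\ge\alpha_k$ this gives $J_{k+1}=\alpha_k$ with $x_{k+1}$ a global minimizer of \eqref{outerproblem}. Then $\alpha_{k+1}=\alpha_k$ and, no redefinition occurring, $\min_{\OAP_k}J=\alpha_{k+1}$, which closes the induction. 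If Subroutine~\ref{alg: outsolver}.2 is performed, the feasibility problem \eqref{Feasibility_prob} at level $\alpha_k$ has no solution, so Remark~\ref{rem: out feas} yields $\alpha_k\ne J_{k+1}$; combined with $J_{k+1}\ge\alpha_k$ this forces $J_{k+1}>\alpha_k$. Here the redefinition sets $\OAP_k=\{x\mid\|x\|\ge\|x_{k+1}\|\}$ with $\|x_{k+1}\|^2=\alpha_k+\varepsilon=\alpha_{k+1}$, so $\min_{\OAP_k}J=\alpha_{k+1}$, again closing the induction.

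I expect the monotonicity inequality $J_{k+1}\ge\alpha_k$ to be the main obstacle: it is invisible at the level of a single iteration and forces the simultaneous induction above. Its delicate point is the bookkeeping distinction between the cumulatively built $\OAP_k$ that enters \eqref{outerproblem} and defines $J_{k+1}$, and the restart redefinition of $\OAP_k$ on line~\ref{inexact: inexact restart}, which only serves to seed the next iteration and must be the version used when propagating the invariant.
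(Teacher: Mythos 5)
Your proof is correct, and it shares the paper's overall skeleton --- the sphere/polyhedron identity, the lower bound $(\forall x\in\OAP_k)\; J(x)\ge\alpha_k$, and the same two-case conclusion --- but it derives that central lower bound by a genuinely different mechanism. You obtain it through a simultaneous induction over the running iterations, propagating the invariant $\min_{x\in\OAP_{k-1}}J(x)=\alpha_k$ and carefully distinguishing the cumulative set that enters \eqref{outerproblem} from its restart redefinition. The paper avoids any induction on optimal values: it observes that $\alpha_k=J(x_{r_k})$ (cumulative steps keep the iterate on the sphere of the current level) and that the constraint $\|x\|^2\ge J(x_{r_k})$ is \emph{literally one of the constraints} of $\OAP_k$ --- it was inserted by the restart redefinition at line \ref{inexact: inexact restart} (and is vacuous when $r_k=0$, since $\OAP_{-1}=\mathbb{R}^n$ and $\alpha_k=0$) and persists under the intersections \eqref{e:recurOk} --- so the bound is immediate for the iteration at hand. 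Your induction is somewhat heavier but buys a slightly stronger invariant (equality of the running minimum with $\alpha_k$, not just a one-sided bound) and makes explicit the bookkeeping that the paper compresses into a single sentence; the paper's constraint-inclusion argument, conversely, is local and shorter. The treatment of the two cases is then essentially identical in both proofs: in case 1, feasibility of $x_{k+1}$ for $\OAP_k$ at value $\alpha_k$ combined with the lower bound gives global optimality; in case 2, emptiness of the level set on $\OAP_k$ together with attainment of the minimum (you invoke Remark \ref{rem: out feas}, the paper argues directly) forces $J_{k+1}>\alpha_k$.
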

\begin{proof}
Notice that $\alpha_k = J(x_{r_{k}})$ and, by definition of $\mathcal{O}_{-1}$ and line \eqref{alg3 step: redefine} in Algorithm \ref{alg: inexact}, the constraint $\{x\in \mathbb{R}^n \mid \|x\|^2\ge \alpha_k\}$ belongs to the outer approximation set $\OAP_k$, which is the feasible set of  \eqref{outerproblem}. Thus,
\begin{equation} \label{e:boundalphakalgo3}
    (\forall x \in \OAP_k)\quad J(x)\ge \alpha_k.
\end{equation}

Assume that Subroutine \ref{alg: outsolver}.1 is performed at iteration $k$.
Then, the new point $x_{k+1}$ is a solution to problem \eqref{Feasibility_prob}. Hence, $x_{k+1}$ is feasible for $\OAP_k$ with $J(x_{k+1})=\alpha_k$.
According to \eqref{e:boundalphakalgo3},
$x_{k+1}$ is a global minimizer of $J$ over $\OAP_k$.

On the other hand, let Subroutine \ref{alg: outsolver}.2 be performed. Then, no point $x\in \OAP_k$ satisfies $J(x)= \alpha_k$. Since $\alpha_k$ is a lower bound for $J$ on $\OAP_k$, we deduce that, for every $x\in \OAP_k$, $J(x)> \alpha_k$. In addition, we know that there 
exists $\widehat{x}_k\in \OAP_k$ such that $J_{k+1}= J(\widehat{x}_k) > \alpha_k$.
\end{proof}

The geometrical interpretation of Lemma \ref{lem: sub1 outcome} is as follows.

\begin{remark}
When the feasibility problem \eqref{Feasibility_prob} has no solution, the polytope $\OAP_k^\alpha$ is inside $B(0,\sqrt{\alpha_k})$.
The constraint set $\OAP_k$ is the complement of the union of open balls $q_i(x)>0$, $i\in \{1,\ldots,m_k\}$ (see Section \ref{lem: geo_sphere}). 
When the feasibility problem \eqref{Feasibility_prob} has no solution,
these open balls completely cover the level set 
$\operatorname{lev}_{=\alpha_k}J$.
Hence, the feasible set $\A$ of \eqref{problem} lies outside $\operatorname{lev}_{=\alpha_k}J$. 
\end{remark}

As a consequence of Lemma \ref{lem: sub1 outcome}, the following result justifies that Algorithm \ref{alg: inexact} builds a sequence of outer approximations to the feasibility set $\A$ of \eqref{problem}.

\begin{corollary}
\label{corollarytolemma}
Let $k \in \mathbb{L}$ be an iteration of Algorithm \ref{alg: inexact}. 
Consider the previous restart iteration $r_k$.
Suppose that Assumption \ref{assumption:epsilon} holds and $r_k$ is not the last restart iteration (i.e., $r_k \neq \max \mathbb{K}$).
Then, the set $\OAP_{r_k-1}$ defined in line \ref{inexact: inexact restart} of Algorithm \ref{alg: inexact} is an outer approximation to the feasibility set $\A$ of \eqref{problem} and $J(x_{r_k}) \le J^*$.
\end{corollary}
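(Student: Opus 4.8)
The plan is to reduce the entire statement to the single inequality $J(x_{r_k})\le J^*$. Recall that in this section $J=\|\cdot\|^2$ (i.e. $z=0$), and that line \ref{inexact: inexact restart} of Algorithm \ref{alg: inexact} sets $\OAP_{r_k-1}=\{x\in\mathbb{R}^n\mid \|x\|\ge\|x_{r_k}\|\}=\{x\in\mathbb{R}^n\mid J(x)\ge J(x_{r_k})\}$. Once $J(x_{r_k})\le J^*$ is established, every $x\in\A$ satisfies $J(x)\ge J^*\ge J(x_{r_k})$, so $\A\subseteq\OAP_{r_k-1}$, which is the outer-approximation claim; hence both conclusions follow from that one inequality.

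To bound $J(x_{r_k})$, I would first track how $J$ evolves. In a cumulative iteration (Subroutine \ref{alg: outsolver}.1) the new iterate lies on $\mathcal{S}_{\alpha_k}$, so $J(x_{k+1})=\alpha_k=J(x_k)$; thus $J$ stays constant between two consecutive restarts. At a restart (Subroutine \ref{alg: outsolver}.2) one takes $J(x_{k+1})=\alpha_k+\varepsilon$. Since $J(x_0)=0$, the successive restart levels are therefore exactly $\varepsilon,2\varepsilon,3\varepsilon,\dots$, so $J(x_{r_k})=N\varepsilon$ for some integer $N\ge 1$. I also note that $x_0=0\notin\A$ (otherwise $I(x_0)=\varnothing$ and the loop would stop at once, leaving $\mathbb{K}=\varnothing$), hence $J^*=\min_{x\in\A}\|x\|^2>0$, and that $J^*$ is attained because $\A$ is closed and $J$ is coercive.

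The core step is to show that a non-last restart level cannot be attained on $\A$. Since $r_k\neq\max\mathbb{K}$, there is a later restart $r'$; by the $J$-constancy between restarts, the feasibility problem \eqref{Feasibility_prob} solved just before $r'$ is posed at level $\alpha=J(x_{r_k})$, and by Lemma \ref{lem: sub1 outcome} it has no solution. This means no point of the sphere $\mathcal{S}_{J(x_{r_k})}$ satisfies all the accumulated quadratic cuts; as each such cut contains $\A$ (Lemma \ref{lem: out set}\eqref{lem: out setii}), we get $\A\cap\mathcal{S}_{J(x_{r_k})}=\varnothing$, i.e. no feasible point has objective value $J(x_{r_k})$. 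Because $J^*$ is attained and, by Assumption \ref{assumption:epsilon}, every level in $(J^*,J^*+\varepsilon)$ is attained on $\A$, this forces $J(x_{r_k})\notin[J^*,J^*+\varepsilon)$, i.e. $J(x_{r_k})<J^*$ or $J(x_{r_k})\ge J^*+\varepsilon$.

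It remains to discard the second alternative, which I would do by induction along the $\varepsilon$-spaced restart levels, using that every restart preceding $r_k$ is again non-last and hence obeys the same dichotomy. For the first restart the level is $\varepsilon$, and $\varepsilon\ge J^*+\varepsilon$ would give $J^*\le 0$, contradicting $J^*>0$; so $\varepsilon<J^*$. Inductively, if the preceding restart level is $<J^*$, then the current one equals it plus $\varepsilon$, hence is $<J^*+\varepsilon$, which is incompatible with $\ge J^*+\varepsilon$; therefore it too is $<J^*$. Applying this up to $r_k$ yields $J(x_{r_k})<J^*\le J^*$, and the corollary follows. The main obstacle is precisely this last direction argument: the no-solution property together with Assumption \ref{assumption:epsilon} only excludes the half-open window $[J^*,J^*+\varepsilon)$, and it is the rigid $\varepsilon$-spacing of the restart levels, combined with non-lastness being inherited by all earlier restarts, that prevents the levels from overshooting beyond $J^*+\varepsilon$ while still being non-last.
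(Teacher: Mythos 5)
Your proposal is correct and rests on essentially the same mechanism as the paper's proof: both exploit the exact $\varepsilon$-spacing of the restart levels (constancy under Subroutine \ref{alg: outsolver}.1, jump by $\varepsilon$ under Subroutine \ref{alg: outsolver}.2) together with the key fact that a further restart occurring after level $\alpha$ forces $\A\cap\operatorname{lev}_{=\alpha}J=\varnothing$ (since the accumulated cuts contain $\A$), which attainment of $J^*$ and Assumption \ref{assumption:epsilon} forbid for $\alpha\in[J^*,J^*+\varepsilon)$. The only difference is packaging — the paper argues by contradiction at the first restart level exceeding $J^*$, whereas you run a forward induction with an explicit dichotomy (supported by the observation $J^*>0$); as a minor side benefit, your half-open window $[J^*,J^*+\varepsilon)$ cleanly covers the boundary case where a restart level equals exactly $J^*+\varepsilon$, which the paper's direct invocation of Assumption \ref{assumption:epsilon} glosses over.
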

\begin{proof}
Assume that $ J^*\ge  J(x_{r_k})$. 
Notice that $\OAP_{r_k-1}$ redefined at line \ref{inexact: inexact restart} of the Algorithm \ref{alg: inexact}, is
$$  \OAP_{r_k-1}=  \{ x \in \mathbb{R}^n \mid J(x)=\|x\|^2 \geq J(x_{r_k})\ \}.   $$
Hence, $\A \subset \OAP_{r_k}$ 
since, for every $x\in \A$, $ J(x)\ge J^*\ge J(x_{r_k})$.

To complete the proof, let us show that assuming $ J^*<  J(x_{r_k})$ leads to a contradiction.
Since $J^*\ge 0$, $r_0 = 0$, and $J(x_{0})=0$, there exist $(s,q)\in \mathbb{K}^2$ such that 
$$ J(x_{0})\le J(x_{s}) \le J^*<   J(x_{s})+\varepsilon= J(x_{q})\le  J(x_{r_k}).   $$

If $J(x_{q})<  J(x_{r_k})$, which means that $r_k\neq q$,
then there exists a next restart iteration $\ell \le r_k$ after $q$.
If $r_k = q$, the existence of $\ell$ follows from the assumption
that $r_k\neq \max \mathbb{K}$.

By construction, the points $x_{q+1},\ldots, x_{\ell-1} $ are computed by Subroutine \ref{alg: outsolver}.1. 
By Lemma \ref{lem: sub1 outcome}, $J(x_{q})=J(x_{q+1})=\ldots=J(x_{\ell-1} )$.
Then, set  $\OAP_{\ell-1}$ built in  \eqref{e:recurOk} (line \ref{line4alg3} of Algorithm \ref{alg: inexact}) is expressed as
\begin{equation}
    \label{oapline4alg3}
    \begin{aligned}
    \OAP_{\ell-1}= & \{ x \in \mathbb{R}^n \mid \|x\|^2 \geq J(x_{q}) \}\cap\\
    &\{ x \in \mathbb{R}^n \mid 
    (\forall p\in \{q,q+1,\ldots, \ell-1\})\, (\forall i \in I(x_p)) \,
    L_i(\varphi_i^p, x_p)(x) \le 0\}. 
\end{aligned}
\end{equation}
According to \eqref{quad form constr}, we have
\begin{equation}
    \label{final1}
    \A \subset \{ x \in \mathbb{R}^n \mid 
    (\forall p\in \{q,q+1,\ldots, \ell-1\})\, (\forall i \in I(x_p)) \,
    L_i(\varphi_i^p, x_p)(x) \le 0\}.
\end{equation}

By Assumption \ref{assumption:epsilon}, $\A \cap \operatorname{lev}_{=J(x_{q})} J\neq \varnothing$. It then follows from
\eqref{final1} that
there exists a point $\overline{x}\in \A$, with $J(\overline{x})=J(x_{q})$ which is a global minimizer of $J$ over $\OAP_{\ell-1}$.
This contradicts the fact that Subroutine \ref{alg: outsolver}.2 is performed at iteration $\ell-1$ (since $\ell\in \mathbb{K}$). Indeed, by Lemma \ref{lem: sub1 outcome}, the optimal value $J_\ell= \min_{x\in \OAP_{\ell-1}}J(x)$  should then satisfy $J_\ell >J(x_{\ell-1})=J(x_{q})$.

\end{proof}

The following two propositions state the main properties of Inexact cutting spheres Algorithm \ref{alg: inexact}.

\begin{proposition}
\label{th: inexact at feasibility}
Consider the Inexact cutting sphere algorithm \ref{alg: inexact} under Assumption \ref{assumption:epsilon}.
Assume that Algorithm  \ref{alg: inexact} terminates at iterate $\bar{k}$ such that $I(x_{\bar{k}})=\varnothing$. Then, 
Iterate ${x}_{\overline{k}}$ is an $\varepsilon$-solution to Problem \eqref{problem}.
\end{proposition}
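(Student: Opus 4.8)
The plan is to verify the two defining requirements of Definition~\ref{def:eps_sol} with $\mathcal{C}=\mathcal{A}$ and $J=\|\cdot\|^2$: that $x_{\bar k}$ is feasible and that $J(x_{\bar k})\le J^*+\varepsilon$. Feasibility is immediate from the stopping rule, since the loop of Algorithm~\ref{alg: inexact} is exited at $\bar k$ only because $I(x_{\bar k})=\varnothing$, which by definition of the index set of violated constraints means $f_i(x_{\bar k})\le 0$ for every $i\in I$, i.e.\ $x_{\bar k}\in\mathcal{A}$; in particular $J(x_{\bar k})\ge J^*$. Everything therefore reduces to establishing the upper bound $J(x_{\bar k})\le J^*+\varepsilon$.

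To this end I would first describe how the objective value evolves along the run. By Lemma~\ref{lem: sub1 outcome}, whenever Subroutine~\ref{alg: outsolver}.1 is used (a cumulative step) one has $J(x_{k+1})=\alpha_k=J(x_k)$, whereas Subroutine~\ref{alg: outsolver}.2 (a restart step) selects $x_{k+1}$ with $J(x_{k+1})=\alpha_k+\varepsilon=J(x_k)+\varepsilon$. Hence $(J(x_k))_k$ is constant on each maximal block of cumulative iterations and increases by exactly $\varepsilon$ at each restart index, i.e.\ at each element of $\mathbb{K}$. Since no restart occurs after $\max\mathbb{K}$, all iterations from $\max\mathbb{K}$ up to $\bar k$ are cumulative, so $J(x_{\bar k})=J(x_{\max\mathbb{K}})$ when $\mathbb{K}\neq\varnothing$, while $J(x_{\bar k})=J(x_0)=0$ when $\mathbb{K}=\varnothing$.

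The core of the argument is then to bound $J(x_{\max\mathbb{K}})$. Writing $r_{\mathrm{last}}=\max\mathbb{K}$, the restart producing it happens at iteration $r_{\mathrm{last}}-1$, so $J(x_{r_{\mathrm{last}}})=J(x_{r_{\mathrm{last}}-1})+\varepsilon$. If $\mathbb{K}$ has a previous element $r_{\mathrm{prev}}$, then $J$ is constant on the cumulative block joining $r_{\mathrm{prev}}$ to $r_{\mathrm{last}}-1$, whence $J(x_{r_{\mathrm{last}}-1})=J(x_{r_{\mathrm{prev}}})$; since $r_{\mathrm{prev}}\neq\max\mathbb{K}$ and Assumption~\ref{assumption:epsilon} holds, Corollary~\ref{corollarytolemma} gives $J(x_{r_{\mathrm{prev}}})\le J^*$, so $J(x_{\bar k})\le J^*+\varepsilon$. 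If instead $r_{\mathrm{last}}$ is the only restart, the preceding block starts at $x_0=0$, so $J(x_{r_{\mathrm{last}}-1})=J(x_0)=0\le J^*$ (using $J\ge0$ and $\mathcal{A}\neq\varnothing$, hence $J^*\ge0$), again yielding $J(x_{\bar k})=\varepsilon\le J^*+\varepsilon$. The case $\mathbb{K}=\varnothing$ is trivial since then $J(x_{\bar k})=0$.

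Combining $J^*\le J(x_{\bar k})\le J^*+\varepsilon$ with $x_{\bar k}\in\mathcal{A}$ places $x_{\bar k}$ in the set $\{x\in\mathcal{A}\mid J(x)\le J^*+\varepsilon\}$, which is precisely the $\varepsilon$-solution set of Definition~\ref{def:eps_sol}. I expect the main obstacle to be the index bookkeeping: justifying rigorously that $J(x_{\bar k})$ equals the value attained at the last restart, and that the value at the second-to-last restart is at most $J^*$ through the correct instantiation of Corollary~\ref{corollarytolemma}, while handling the degenerate cases of zero or one restart via $J^*\ge0$.
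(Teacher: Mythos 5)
Your proposal is correct and follows essentially the same route as the paper's proof: feasibility from the stopping rule, the evolution of $J$ via Lemma~\ref{lem: sub1 outcome} (constant on cumulative blocks, $+\varepsilon$ at restarts), and Corollary~\ref{corollarytolemma} applied to the penultimate restart to get the bound $J(x_{\bar k})\le J^*+\varepsilon$. The only difference is cosmetic — you split cases on the structure of $\mathbb{K}$ (empty, singleton, larger) while the paper splits on whether $J(x_{\bar k})=J^*$ or $J(x_{\bar k})>J^*$ — and your bookkeeping of the degenerate cases via $J(x_0)=0\le J^*$ matches the paper's.
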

\begin{proof}
Algorithm  \ref{alg: inexact} returns a point ${x}_{\overline{k}}\in \X$, at an iterate $\overline{k}\in \mathbb{N}$.
By assumption, $I({x}_{\overline{k}})=\varnothing$, i.e., ${x}_{\overline{k}}$ is feasible for Problem \eqref{problem}. Since ${x}_{\overline{k}}\in \mathcal{A}$, we have that $J(x_{\bar{k}})\geq J^*$. 
The following two cases may arise:
\begin{enumerate}
    \item $J(x_{\bar{k}})=J^*$, then $x_{\overline{k}}$ is a global solution to Problem \eqref{problem}, hence a trivial $\varepsilon$-solution to this problem.
    \item Otherwise, $J(x_{\bar{k}})> J^*$. In this case the previous restart arisen at $r_{\bar{k}}$  was updated by Subroutine~\ref{alg: outsolver}.2.
    We have  $J(x_{\bar{k}})=J(x_{r_{\bar{k}}})$. Therefore $r_{\bar{k}}> 0$
    since $J(x_0) = 0 \le J^*$, and $J(x_{\bar{k}})
    =J(x_{r_{\bar{k}}-1})+\varepsilon$. Since there was necessarily a restart at some iteration $\ell \in  \{0,\dots,r_{\bar{k}}-1\}$, it follows from Corollary \ref{corollarytolemma} that $$J(x_{r_{\bar{k}}-1})+\varepsilon=J(x_{\bar{k}})> J^*\ge J(x_\ell) = J(x_{r_{\bar{k}}-1}).$$ 
    Equivalently,  
    $$J(x_{{\bar{k}}})> J^*\ge J(x_{{\bar{k}}})-\varepsilon,
    $$
    and we deduce  that $J^*+\varepsilon \ge J(x_{\bar{k}})$.
  Hence, $x_{\bar{k}}$ is an $\varepsilon$-solution to problem \eqref{problem}.
  
\end{enumerate}

\end{proof}

\begin{proposition}\label{prop:inexactbis} 
Consider Problem \eqref{problem}.
Suppose that Assumptions \ref{a:compact} and \ref{assumption:epsilon} hold. 
Let $(\mathcal{O}_{k-1},x_k)_{k\in  \mathbb{L}}$ be generated by Algorithm \ref{alg: inexact}.
\begin{enumerate}
    \item\label{prop:inexi} 
    For every $k\in \mathbb{L}$, we have
    $\mathcal{O}_{k-1} \subseteq \mathcal{O}_{k-2} \subseteq \ldots \subseteq \mathcal{O}_{r_{k}-1}$.
    \item\label{prop:inex3} $(J(x_k))_{k\in \mathbb{L}}$ is a nondecreasing sequence bounded from above
    by $J^*+\varepsilon$ and $(J(x_k))_{k\in \mathbb{K}}$ is an increasing sequence.
    \item\label{prop:inexii}  For every $k\in \mathbb{L}$,
    $
    \{\mathcal{A}\setminus \lev{J^*+\varepsilon}J\}\subseteq \mathcal{O}_{k-1}$.
    \item\label{prop:inexiii}  For every $k\in \mathbb{L}$, $I(x_k) = \varnothing$ if and only if $x_k \in {\mathcal{A}}$ and $J(x_k)\le J^*+\varepsilon$.
    
    \item For every $k\in \mathbb{L}$, the number of constraints in  $\mathcal{O}_{k-1}$ never exceeds $\overline{m}$.
    \item There exists an iteration ${k^*}\in \mathbb{N}$ for which the algorithm stops.
    \item Either 
    $x_{{k^*}} \in \lev{J^*+\varepsilon}J$ and $x_{{k^*}} \in \A$ , or 
    the resulting lower approximation to $J^*+\varepsilon$ is
    $J(x_{{k^*}})$.
  
\end{enumerate}
\end{proposition}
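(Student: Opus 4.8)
The plan is to prove the seven items in the stated order, reusing the architecture of the proof of Proposition~\ref{prop:warmbis} but substituting the outcomes of Subroutine~\ref{alg: outsolver} recorded in Lemma~\ref{lem: sub1 outcome} for the exact minimization step, and keeping in mind that a restart now installs the single constraint $J(\cdot)\ge J(x_{r_{k+1}})$ at line~\ref{inexact: inexact restart}. For item~\eqref{prop:inexi}, between two consecutive restarts every iteration is cumulative, so $\mathcal{O}_k$ is built by \eqref{e:recurOk} at line~\ref{line4alg3}, giving $\mathcal{O}_k\subseteq\mathcal{O}_{k-1}$; chaining these inclusions from $r_k$ up to $k$ yields the claim.

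For the monotonicity in item~\eqref{prop:inex3}, I would invoke Lemma~\ref{lem: sub1 outcome}: a cumulative step (Subroutine~\ref{alg: outsolver}.1) leaves $J(x_{k+1})=\alpha_k=J(x_k)$ unchanged, while a restart (Subroutine~\ref{alg: outsolver}.2) raises it by exactly $\varepsilon$, i.e.\ $J(x_{k+1})=J(x_k)+\varepsilon$. Hence $(J(x_k))_{k\in\mathbb{L}}$ is nondecreasing, it is constant along each cumulative block, and its strict increases occur precisely at restarts, so $(J(x_k))_{k\in\mathbb{K}}$ is increasing with jumps of size $\varepsilon$. The upper bound $J^*+\varepsilon$ is the delicate point: for every restart index that is not the last one, Corollary~\ref{corollarytolemma} (which requires Assumption~\ref{assumption:epsilon}) gives $J(x_{r})\le J^*$; since $J$ is constant on each block, the value just before the final restart is governed by a non-final restart and is therefore $\le J^*$, whence the final restart value is $\le J^*+\varepsilon$ and every iterate inherits one of these bounds.

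I would prove item~\eqref{prop:inexii} by induction on the index of $\mathcal{O}$, with base case $\mathcal{O}_{-1}=\mathbb{R}^n$. In a cumulative step the quadratic cuts retain all of $\mathcal{A}$ by Lemma~\ref{lem: out set}\eqref{lem: out setii} and \eqref{quad form constr}, and the induction hypothesis transfers the inclusion $\mathcal{A}\setminus\lev{J^*+\varepsilon}J\subseteq\mathcal{O}_{k-1}$ through the intersection. In a restart step $\mathcal{O}_k=\{x\mid J(x)\ge J(x_{k+1})\}$ with $J(x_{k+1})\le J^*+\varepsilon$ by item~\eqref{prop:inex3}, so every $x$ satisfying $J(x)>J^*+\varepsilon$ also satisfies this constraint; this is exactly why the weaker set $\mathcal{A}\setminus\lev{J^*+\varepsilon}J$, rather than all of $\mathcal{A}$, appears, since the final restart constraint may legitimately discard feasible points with $J^*\le J(x)\le J^*+\varepsilon$. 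Item~\eqref{prop:inexiii} is then immediate: $I(x_k)=\varnothing$ is by definition equivalent to $x_k\in\mathcal{A}$, and the accompanying bound $J(x_k)\le J^*+\varepsilon$ is automatic from item~\eqref{prop:inex3}.

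The remaining three items are structural. Item~(v) follows from the loop guard $m_k\le\overline{m}$ at line~\ref{cond2_inexact}: $\mathcal{O}_k$ is built only when this test passes, and a restart resets the count to a single constraint, so $\mathcal{O}_{k-1}$ never carries more than $\overline{m}$ constraints. For item~(vi) I would split into the two cases used in Proposition~\ref{prop:warmbis}\eqref{prop:warm4}: if some iterate satisfies $I(x_{k^*})=\varnothing$ the loop exits at once; otherwise, by item~\eqref{prop:inex3} the increasing sequence $(J(x_k))_{k\in\mathbb{K}}$ has jumps of size $\varepsilon$ and is bounded by $J^*+\varepsilon$, forcing $\mathbb{K}$ to be finite, after which every iteration is cumulative and strictly increases the count $m_k$ until it exceeds $\overline{m}$ and the loop terminates. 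Item~(vii) then combines these two stopping scenarios with items~\eqref{prop:inexiii} and \eqref{prop:inex3}: a feasible stop gives $x_{k^*}\in\mathcal{A}\cap\lev{J^*+\varepsilon}J$ (an $\varepsilon$-solution in the sense of Proposition~\ref{th: inexact at feasibility}), whereas an infeasible stop leaves $J(x_{k^*})\le J^*+\varepsilon$ as the resulting lower estimate. The main obstacle throughout is the upper bound in item~\eqref{prop:inex3} together with the precise form of item~\eqref{prop:inexii}: both hinge on Corollary~\ref{corollarytolemma}, which only controls non-final restarts, so the final restart must be handled separately and may raise $J$ up to $J^*+\varepsilon$ --- this is what dictates the set difference $\mathcal{A}\setminus\lev{J^*+\varepsilon}J$ in place of all of $\mathcal{A}$.
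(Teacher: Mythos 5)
Your proposal is correct and follows essentially the same route as the paper's proof: nestedness of the $\mathcal{O}_k$ between restarts, Lemma \ref{lem: sub1 outcome} for the constancy/jump structure of $(J(x_k))$, Corollary \ref{corollarytolemma} applied to the (necessarily non-final) restart preceding the current one to get the bound $J^*+\varepsilon$, the quadratic-minorant property \eqref{quad form constr} plus that bound for the inclusion $\mathcal{A}\setminus\lev{J^*+\varepsilon}J\subseteq\mathcal{O}_{k-1}$, and finiteness of $\mathbb{K}$ from the $\varepsilon$-jumps to obtain termination. The only cosmetic differences are that you organize item \eqref{prop:inexii} as an induction where the paper writes $\mathcal{O}_{k-1}$ out explicitly via \eqref{oapline4alg3}--\eqref{final1}, and you argue item \eqref{prop:inexiii} directly rather than citing Proposition \ref{th: inexact at feasibility}; both are equivalent in substance.
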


\begin{proof}\ 
\begin{enumerate} 
    \item These inclusions follow from the fact that, between two consecutive restarts, Algorithm~\ref{alg: warm_rest} accumulates constraints.
       \item  In a cumulative step, the non-decreasing property follows from
    \eqref{prop:warmi}. In turn, the cost increases when a restart iteration
    occurs at iteration $k$ as Subroutine \ref{alg: outsolver}.2 (inexact restart iteration) yields
    $$x_{k+1}\in\{x\in \mathbb{R}^n\ |\ J(x)=J(x_k)+\varepsilon\}.$$
    This shows that $(J(x_k))_{k\in \mathbb{K}}$ is an increasing sequence.\\
    Let $k\in \mathbb{L}$.
    If $r_k\neq 0$, let $\ell \in \mathbb{K}$ be the restart iteration before $r_k$. 
    It follows from Corollary~\ref{corollarytolemma} that $J(x_\ell)\le J^*$
    and 
    \begin{equation}\label{e:JkJrkJse}
    J(x_k)=J(x_{r_k})=J(x_\ell)+\varepsilon\le J^*+\varepsilon.
    \end{equation}
    If $r_0 = 0$, the inequality still holds, since $J(x_k) = J(0) \le J^*$.
    \item Notice that, for every $k\in \mathbb{L}$, 
    \eqref{oapline4alg3} and \eqref{final1} hold if $\ell=k$ and
    $q = r_k$. 
    Moreover, since \eqref{e:JkJrkJse} is satisfied,
     $$
     \{x \in \mathbb{R}^n \mid J(x)> J^*+\varepsilon\}\subset \{x \in \mathbb{R}^n \mid J(x)\ge J(x_{r_k})\}.
     $$
     By \eqref{quad form constr} 
     $$
     \A\subseteq \{ x \in \mathbb{R}^n \mid 
   (\forall\, r_k\le p\le k-1),\, (\forall i \in I(x_p))\;
    L_i(\varphi_i^p, x_p)(x) \le 0\}.
     $$
     Therefore, we have
        \begin{align*}
     &\mathcal{A}\setminus (\lev{J^*+\varepsilon}J)\\
     &\subseteq \{ x \in \mathbb{R}^n \mid 
   (\forall\, p\in\{r_k,\ldots,k-1\})\,(\forall i \in I(x_p))\;
    L_i(\varphi_i^p, x_p)(x) \le 0\}
    \cap \{x \in \mathbb{R}^n \mid J(x)> J^*+\varepsilon\}\\
    &\subset \{ x \in \mathbb{R}^n \mid 
   (\forall\, p\in\{r_k,\ldots,k-1\})\,(\forall i \in I(x_p))\;
    L_i(\varphi_i^p, x_p)(x) \le 0\}
    \cap \{x \in \mathbb{R}^n \mid J(x)\ge J(x_{r_k})\}\\&=
    \OAP_{k-1}.
    \end{align*}
     \item\label{prop:warmiiii.2} This follows from Proposition \ref{th: inexact at feasibility} and \eqref{prop:inex3}.
\item This property is ensured by the construction of Algorithm \ref{alg: inexact}. 

\item  
%

Since $\varepsilon>0$, we repeat \emph{mutatis mutandis} the proof of Proposition \ref{prop:warmbis}.v which shows that the number of restart iterations must be finite, i.e., the cardinality of set $\mathbb{K}$ is finite. 
\item 
The proof follows from 
Proposition \ref{th: inexact at feasibility} and \eqref{prop:inex3}.


    \end{enumerate}
    \end{proof}

\section{First numerical example: the circular packing problem}
\label{sec packing}
The circular packing problem (\ref{cpp}), introduced in the next Section \ref{subsec pack ref}, is an NP-Hard problem and it is often reformulated as a QCQP problem. 
An extensive literature on \ref{cpp} exits (see, e.g., \cite{ addis2008efficiently, castillo2008solving,stetsyuk2016global})\footnote{The websites '\url{http://www.packomania.com/cciuneq/}' and '\url{http://hydra.nat.uni-magdeburg.de/packing/ccis/ccis.html}' are useful resources.}.

Our aim is to test the Inexact cutting sphere algorithm \ref{alg: inexact} on a simple instance of \ref{cpp} for illustrative purposes.
\begin{itemize}
    \item In Section \ref{subsec pack ref},  we define \ref{cpp} and reformulate it in the form of \eqref{problem}.
    \item In Section \ref{subsec numerical}, we provide some numerical results.
    More information on how we solve the outer approximation problem \eqref{outerproblem} at each iteration can be found in Subsection \ref{subsec compute}.
\end{itemize}

\subsection{Reformulating the packing problem in the form of (P)}
\label{subsec pack ref}

The circular packing problem consists in finding the minimal radius $r$ of a circle in the plane, centered at zero, that contains $m$ circles of given radius $r_i$, for $i\in \{1,\dots,m\}$.
The centers $(x_i,y_i)_{1\le i \le m}$ of the circles to be packed and the radius $r$ of the packing circle 
are the sought variables of (CPP).

Let $x=(x_i)_{1\le i \le m}$ and $y=(y_i)_{1\le i \le m}$.
The circular packing problem can be modelled as follows:
\begin{equation}
    \label{cpp}
    \tag{CPP}
     \begin{aligned}
        &\minimize{r,x,y}\quad r^2\\
        &\text{s.t.}\;
        \begin{cases}
        f_i(x,y,r):=x_i^2+y_i^2- (r-r_i)^2\le 0,\;\;i \in \{1,\ldots,m\}\\
        f_{i,j}(x,y):=-(x_i-x_j)^2-(y_i-y_j)^2+(r_i+r_j)^2\le 0,\;\;1\le i<j\le m\\
        r\ge r_i,\;\; i \in \{1,\ldots,m\}.
        \end{cases}
    \end{aligned}
\end{equation}
   
\eqref{cpp} is a special case of Problem \eqref{prob: P0}. 
Hence, \eqref{cpp} can be cast in the form of \eqref{problem} as discussed in Section \ref{chap: reformulation}. 
The calculations presented in Section \ref{chap: reformulation} can be performed as follows.
   
The first and the last group of constraints can be rewritten as
    \begin{align*}
    (\forall i \in \{1,\ldots,m\})\;
        \begin{cases}
        f_i(x,y,r):=x_i^2+y_i^2- (r-r_i)^2\le 0,\\
        r\ge r_i
        \end{cases}
        \quad \Leftrightarrow \quad
        & r\geq F(x,y)
    \end{align*}
where, for every $i\in\{1,\ldots,m\}$, $\|(x_i,y_i)\|$ is the Euclidean norm of point $(x_i,y_i)\in\mathbb{R}^2$ and we have defined
\[
F(x,y)=\max_{i\in \{1,\dots,m\}} \{  (\|(x_i,y_i)\|+r_i)^2\}
\]
We can thus rewrite the packing problem as 
 \begin{align}
 \begin{aligned}\label{prob: packing}
        &\minimize{x,y}\;F(x,y),\\
        &\text{s.t.}\ \ 
        f_{i,j}(x,y)\le 0,\ \ 1\le i<j\le m.
         \end{aligned}
    \end{align}
As described in Section \ref{chap: reformulation},  we need to choose $(\rho,\eta)\in]0,+\infty[ \times \mathbb{R}$, such that
\begin{equation}
    \label{set: D_cpp}
    D = \left\{(\widehat{x},\widehat{y})\in  \operatorname{Argmin}_{(x,y)\in S}F(x,y) \mid F(\widehat{x},\widehat{y}) + \eta \ge \frac{\rho}{2}\|(\widehat{x},\widehat{y})\|^2\right\}\neq\varnothing. 
\end{equation}   
Take $\rho/2=1/m$ and $\eta=0$. 
For every $(x,y)\in \mathbb{R}^{2m}$,
$$
F(x,y)\ge \max_{i\in \{1,\ldots,m\}}\,(\{\|({x}_i,{y}_i)\|^2\}\ge \frac{1}{m}\sum\limits_{i=1}^m \|({x}_i,{y}_i)\|^2=\frac{1}{m}\|({x},{y})\|^2,
$$
where $\|({x},{y})\|$ denotes the Euclidean norm of $({x},{y})\in \mathbb{R}^{2m}$. 
According to Theorem \ref{th: minnoncon},  introducing an auxiliary variable $p\in\mathbb{R}$ allows us to rewrite \eqref{prob: packing} equivalently as
\begin{equation}
    \label{cpp new}
    \tag{CPP.2}
    \begin{aligned}
         &\minimize{(x,y,p)\in\mathbb{R}^{2m+1}}\quad \|(x,y,p)\|^2\\
        &\text{s.t.}\;
        \begin{cases}
        f_{i,j}(x,y)\le 0,\ \ 1\le i<j\le m,\\
        \displaystyle g(x,y,p):=F(x,y)-\frac{\|(x,y,p)\|^2}{m}\le 0.
        \end{cases}
    \end{aligned}
\end{equation}

Once a solution $(x^*,y^*,p^*)$ of
\eqref{cpp new} is found, we get the optimal locations $(x^*,y^*)$ of the centers of the circles to be packed in the plane. 
The minimal radius $r^*$ of the circle centered at the origin that contains the circles of radius $(r_i)_{1\le i \le m}$ satisfies
$r^*=F(x^*,y^*)$.

\subsection{Numerical experiment with four circles of radius 1}
\label{subsec numerical}

Our experiment consists in an instance of the packing problem \eqref{cpp new}, where we pack four circles of radius one. 
Hence, the dimension of the search space is $9$ and the number of weakly convex (quadratic) constraints is $m(m-1)/2+1=7$.

Since this is a simple instance, it is not difficult to find an optimal position of the circle centers in the plane, which can be 
\begin{equation}
    \label{xopt}
    (x^*,y^*)=(x_1,x_2,x_3,x_4,y_1,y_2,y_3,y_4)=(1,1, -1, -1,  1   , -1   ,  1   , -1).
\end{equation}

Moreover, we can check, with a grid search, that the variable $p\in \R$, which is essential in the formulation of the circular packing problem \eqref{cpp new}, takes the optimal value $p^*=3.914$ for $(x^*,y^*)$ as in \eqref{xopt}.
Therefore, the optimal level set for \eqref{cpp new} is $23.3194$.

We start from a random point on $\operatorname{lev}_{=21}\,J$
and we set $\varepsilon=3$. 

Then, we expect the Inexact cutting sphere algorithm \ref{alg: inexact} to  
\begin{enumerate}
    \item establish that no optimal feasible solution for the  \eqref{cpp new} exists at $\operatorname{lev}_{=21}\,J$,
    \item  jump to $\operatorname{lev}_{=24}\,J$ with a restart iteration,
    \item find a feasible solution $(x^*,y^*,p^*)$ for  \eqref{cpp new} at the current level set, i.e. $J(x^*,y^*,p^*)=\|(x^*,y^*,p^*)\|^2=24$.
\end{enumerate}

We ran our experiment on a	Microsoft Windows 10 Pro machine with 128 GB of RAM, type DDR4, supported by a 3.5 Ghz processor AMD Ryzen 9 3950X 16-Cores.

After $4$ hours, $12$ minutes, and $20$ seconds, at iteration $1143$, Algorithm \ref{alg: inexact} establishes that no feasible solution for \eqref{cpp new} exists at $\operatorname{lev}_{=21}\,J$. The number of constraints at this point is $5231$. So a restart iteration happens and $x_{1143}$ is a random point on the level set  $\operatorname{lev}_{=24}\,J$. 
Then, after $6$ minutes, for a total running time of 4:18:01 h, at iteration number $2267$, the feasible $\varepsilon$-optimal solution 
\begin{align*}
    &(x^*,y^*,p^*)=( 1.33713909,  0.58961196, -1.31825987, -0.5798902 ,\\
    & -0.55018788,
        1.32148933,  0.59123494, -1.32301583,  3.95491652)
\end{align*}
is found, which is graphically represented in Figure \ref{4cpp}. It is worth mentioning that for level sets which are far from optimal level set, Algorithm \ref{alg: inexact} is much faster to recognize  that no feasible solution for \eqref{problem} belongs to the level set and to perform a restart iteration.

\begin{figure}[H]
    \centering
    \includegraphics[width=5cm,height=5cm]{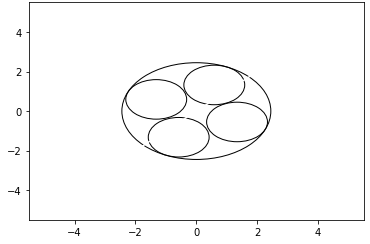}
    \caption{$(x^*,y^*,p^*)$ for the  \eqref{cpp new}  with four circles of radius one. }
    \label{4cpp}
\end{figure}

Computational details for solving the outer approximation problem \eqref{outerproblem} at each iteration are given next.

\subsection{ Implementing
Subroutine \ref{alg: outsolver}  }
\label{subsec compute}

The implementation of 
Subroutine \ref{alg: outsolver}, which is performed at each step of the Inexact cutting sphere algorithm \eqref{alg: inexact} in order to find $(x,y,p)_{k+1}$, is as follows:
\begin{enumerate}
\item We find a point $(\hat{x}_1,\hat{y}_1,\hat{p}_1)\in \OAP_k^\alpha$, where $\OAP_k^{\alpha_k}$  is the polyhedron defined in \eqref{Feasibility_prob} and $\alpha_k=\|(x_k,y_k,p_k)\|^2$, such that $\|(\hat{x}_{1},\hat{y}_{1},\hat{p}_{1}\|^2\le \alpha_k$.
\item The solver \cite{gurobi} is employed to solve the concave minimization problem:
\begin{equation}
    \label{concaveprob}
    \maximize{(x,y,p)\in\OAP_k^{\alpha_k}}\, \|(x,y,p)\|^2
\end{equation}
with an early stopping command, such that the solver \cite{gurobi} terminates as soon as it finds a point $(\hat{x}_2,\hat{y}_2,\hat{p}_2)$, feasible for $\OAP_k^{\alpha_k}$ such that $\|(\hat{x}_2,\hat{y}_2,\hat{p}_2)\|^2 \ge \alpha_k $.
\item If such a point $(\hat{x}_2,\hat{y}_2,\hat{p}_2)$ exists, as already discussed in Subsection \ref{sec: feas quad }, we can find a global solution $(x_{k+1},y_{k+1},p_{k+1})$ for the outer approximation problem \eqref{outerproblem} by looking for the point on the segment linking 
 $(\hat{x}_1,\hat{y}_1,\hat{p}_1)$ to $(\hat{x}_2,\hat{y}_2,\hat{p}_2)$ such that 
$\|(x_{k+1},y_{k+1},p_{k+1})\|^2=\alpha_k$.
This provides the result of 
Subroutine \ref{alg: outsolver}.1.
\item Otherwise, \cite{gurobi} establishes that the optimal solution $(\overline{x},\overline{y},\overline{p})$ of \eqref{concaveprob} has an objective value $\|\overline{x},\overline{y},\overline{p}\|^2 < \alpha_k $ and we change level set by setting $(x_{k+1},y_{k+1},p_{k+1})$ equal to a random point on the level set $\alpha_k+\varepsilon$, 
as described in Subroutine \ref{alg: outsolver}.2. 
\end{enumerate}

\section{Second numerical experiment: Multiclass Neyman-Pearson Classification}
\label{sec multiclass}

In this section, we apply our method to the  Multiclass Neyman-Pearson Classification (MNPC) problem, which involves a weakly convex objective function and weakly convex constraints. We consider a simple instance of the MNPC problem and we find a feasible global $\epsilon$-optimal solution.

\subsection{Reformulation of the Neyman-Pearson Classification problem as \eqref{problem}}
We formulate the MNPC problem as in \cite{ma2020quadratically}. 
Consider a training dataset formed by a number of instances  
belonging to $K$ classes indexed by $k\in \{1,\dots,K\}$.
For every $k\in \{1,\ldots,K\}$, let $\mathbb{D}_k$ be sets of indices of 
instances in the class $k$, that is every instance of the training dataset $\zeta_t\in \mathbb{R}^p$ belongs to class $k$ if and only if $t\in \mathbb{D}_k$.
Let $|\mathbb{D}_k|$, $k\in \{1,\ldots,K\}$, be the cardinality of each class, i.e. the number of instances $\zeta_t$ with $t\in \mathbb{D}_k$.

We want to find the optimal weight vectors $x_k\in \mathbb{R}^p$, with $k\in \{1,\ldots,K\}$, of a linear classifier.  A new instance
$\overline{\zeta}\in \mathbb{R}^p$ is classified by searching
\begin{equation}
    \max_{k\in \{1,\ldots,K\}}\ \ x_k^\top\overline{\zeta}.
\end{equation}
In the following, the optimization variable will be the vector $x\in \mathbb{R}^{n}$
with $n=pK$ obtained by concatenating the vectors $(x_k)_{1\le k \le K}$. 

To achieve a good classification accuracy by prioritizing the class $k=1$ and simultaneously controlling the classes $k\in \{2,\ldots,K\}$, we solve the following problem:
\begin{equation}
\label{prob: NPclass}
\tag{NPC}
\begin{aligned}
    &\minimize{x\in \mathbb{R}^{n}}\ \ F_1(x):=\frac{1}{|\mathbb{D}_1|} \sum\limits_{\ell \neq 1}\sum\limits_{t\in \mathbb{D}_1} \, \psi(x_1^\top \zeta_t - x_\ell ^\top\zeta_t)\\
    &\text{s.t}\quad
    \begin{cases}
    &\displaystyle   \frac{1}{|\mathbb{D}_k|} \sum\limits_{\ell\neq k}\sum\limits_{t\in \mathbb{D}_k} \, \psi(x_k^\top\zeta_t - x_\ell^\top\zeta_t)\le r_k,\ \ k\in \{2,\ldots,K\},\\
    &\|x_k\|^2\le \lambda^2,\ \ k\in \{1,\ldots,K\},
    \end{cases}
    \end{aligned}
\end{equation}
where $\lambda$ and $(r_k)_{2\le k \le K}$ are positive real constants 
and
$$
(\forall \xi \in \R) \quad \psi(\xi) = (1+e^\xi)^{-1}.
$$
The first, second, and third order derivatives of $\psi$ are given by
\begin{align*}
(\forall \xi \in \R) \quad  \begin{cases}
&\psi'(\xi) = -e^\xi(1+e^\xi)^{-2}\\
&\psi''(\xi) = e^\xi (e^\xi-1) (1+e^\xi)^{-3}\\
&\psi'''(\xi) = -e^\xi (e^{2\xi}-4 e^\xi +1) (1+e^\xi)^{-4}.
\end{cases}
\end{align*}
Since $\psi$ is a function with a bounded second-order derivative,
it is $\rho_\psi$-weakly convex \cite[Theorem~18.15(iii)]{bauschke2017convex}. Its minimum is reached when $e^\xi = 2-\sqrt{3}$
which is the smaller zero of $\psi'''$ and we have $\psi''(2-\sqrt{3}) = -1/(6\sqrt{3})= -\rho_\psi$.

Since, for every $(k,\ell) \in \{1,\ldots,K\}^2$ and $t\in\mathbb{D}_k$
\[
x \mapsto \psi(x_k^\top\zeta_t - x_\ell^\top\zeta_t)
\]
is the composition of a linear operator and a weakly convex function, it is weakly convex. We deduce that, for every $k\in \{1,\ldots,K\}$,
\[
x \mapsto \frac{1}{|\mathbb{D}_k|} \sum\limits_{\ell\neq k}\sum\limits_{t\in \mathbb{D}_k} \, \psi(x_k^\top\zeta_t - x_\ell^\top\zeta_t)
\]
is also $\rho_k$-weakly convex. The weak-convexity moduli
can be estimated
as shown in the following remark.
\begin{remark}
\label{rem: NPC get rho}
Without loss of generality, we can focus on $\rho_1$. First we calculate the Jacobian of $F_1$ at $x\in \R^n$:
\begin{equation}
\label{eq: nablaF1}
    \nabla F_1(x)= 
    \frac{1}{|\mathbb{D}_1|}\sum\limits_{\ell\neq 1}\sum\limits_{t\in \mathbb{D}_1}  
    \psi'(x_1^\top\zeta_t - x_\ell^\top\zeta_t)\,
    u_\ell \otimes \zeta_t,
\end{equation}
where $\otimes$ is the Kronecker product and, for every $\ell \in \{2,\ldots,K\}$, $u_\ell$ is the $K$-dimensional vector whose first component is $1$,
$\ell$-th component is -1, and all the other components are zero.
We deduce the Hessian of $F_1$ at $x$:
\begin{equation}
\label{eq: hessF1}
    \nabla^2 F_1(x)=
    \frac{1}{|\mathbb{D}_1|}\sum\limits_{\ell\neq 1} \sum\limits_{t\in \mathbb{D}_1}  
    \psi''(x_1^\top\zeta_t - x_\ell^\top\zeta_t)\,
    (u_\ell u_\ell^\top)  \otimes (\zeta_t \zeta_t^\top).
\end{equation}
It follows that an estimate of the weak-convexity modulus of $F_1$ is
\[
\rho_1 = \frac{\rho_\psi}{|\mathbb{D}_1|} \sum\limits_{\ell\neq 1}\sum\limits_{t\in \mathbb{D}_1}  
 \|u_\ell\|^2 \|\zeta_t\|^2
 = \frac{2(K-1)}{|\mathbb{D}_1|}\rho_\psi \sum\limits_{t\in \mathbb{D}_1}  
 \|\zeta_t\|^2.
\]
\end{remark}

Consequently, \eqref{prob: NPclass} is an instance of \eqref{prob: P0}.
Let us reformulate \eqref{prob: NPclass} as an instance of \eqref{problem}, Let ${x}^*\in \mathbb{R}^{n}$ be a global optimum  of \eqref{prob: NPclass}. 
Since $x^*$ is feasible for  \eqref{prob: NPclass}, the constraints $\lambda^2 \ge \|x_k^*\|^2$ are satisfied, for every $k\in \{1,\ldots,K\}$, we have
\begin{equation}
\label{eq: reformulation}
     F_1(x^*)+K\lambda^2\ge K\lambda^2\ge \sum\limits_{k=1}^K\|x_k^*\|^2=\|x^*\|^2.
\end{equation}
We can apply Theorem \ref{th: minnoncon} by choosing $\eta= K\lambda^2$ and $\rho=2$, and reformulate \eqref{prob: NPclass} as follows.
\begin{equation}
\label{prob: NPclass refor}
\tag{$\text{NPC}'$}
\begin{aligned}
    &\minimize{x\in \mathbb{R}^{n+1}}\ \ \|x\|^2\\
    &\text{s.t.}\quad 
    \begin{cases}
    \displaystyle\frac{1}{|\mathbb{D}_1|} \sum\limits_{\ell\neq 1}\sum\limits_{t\in \mathbb{D}_1} \, \psi(x_1^\top\zeta_t - x_\ell^\top\zeta_t)+ K\lambda^2- \|x\|^2 
    \le 0\\
    \displaystyle\frac{1}{|\mathbb{D}_k|} \sum\limits_{\ell\neq k}\sum\limits_{t\in \mathbb{D}_k} \, \psi(x_k^\top\zeta_t - x_\ell^\top\zeta_t)-r_k \le 0,\ \ k\in \{2,\ldots,K\}\\
    \displaystyle\|x_k\|^2\le \lambda^2,\ \ k\in \{1,\ldots,K\}.
    \end{cases}
    \end{aligned}
\end{equation}

\subsection{Numerical experiment}

We consider the database Iris from LIBSVM, which includes three classes of fifty instances each (i.e., for every 
$k\in\{1,2,3\}$, $|\mathbb{D}_k|=50$), with the feature space of
dimension $p=4$.

We consider problem \eqref{prob: NPclass} with hyperparameters $\lambda=0.3$, $r_2=r_3=0.92$.
The choice of the hyperparameters is 
consistent, in terms of magnitude, with the choice of hyperparameters in \eqref{prob: NPclass} made in \cite{ma2020quadratically} for different LIBSVM datasets.
We reformulate \eqref{prob: NPclass} as \eqref{prob: NPclass refor} by setting $\eta=K\lambda^2=0.27$ and $\rho=2$. 

We run the algorithm by choosing $\varepsilon\in \{0.5,0.2,0.05\}$. The following solutions are provided
by the algorithm:

\begin{itemize}
\item if $\varepsilon=0.5$,
\begin{align*}
    x^*=&[-0.16782005,  0.18348022, -0.18064031, -0.00444046, -0.08701317,
       -0.22545917,
    \\
      &0.13552531,  0.09277094,  0.14427671,  0.14017615,
        0.20719037,  0.04599464,  0.86292273]^\top;
    \end{align*}
    \item if $\varepsilon=0.2$,
\begin{align*}
    x^*=&[-0.17051667,  0.1388078 , -0.18541628, -0.07800498, -0.06272521,
       -0.22240767, 
    \\
      &0.17923803, -0.06206627,  0.14516118,  0.17358443,
        0.06955292,  0.14535573,  0.86870314]^\top;
    \end{align*}
\item if $\varepsilon=0.05$,
\begin{align*}
    x^*=&[-0.07385115,  0.17776952, -0.14311831, -0.16719657, -0.04696297,
       -0.21780944, 
    \\
      &0.18972297,  0.07418398,  0.20093824,  0.10209327,
        0.1188008 ,  0.16037645,  0.83230311]^\top.
    \end{align*}
\end{itemize}

In Table \ref{t:MNPC}, we provide the number of iterations, the computational time in hours:minutes:seconds (with the same computer configuration as in the previous example), the value of $J$ (objective value of \eqref{prob: NPclass refor}), and the value of $F_1$ (objective value of \eqref{prob: NPclass}) at the $\varepsilon$-solutions.
\begin{table}[htb]
\begin{center}
\caption{Algorithm performance for
the MNPC problem.}
\label{t:MNPC}
\begin{tabular}{ |c|c|c|c|c| } 
\hline
$\varepsilon$ & iterations &  time & $J$ & $F_1$ \\
\hline
0.5 & 79 &0:00:03 & 1.01 & 0.7454\\
0.2& 165& 0:24:02 & 1.01& 0.7443\\
0.05& 780 & 11:10:48 &0.96 & 0.6933  \\ 
\hline
\end{tabular}
\end{center}
\end{table}

In Figure \ref{fig:plot1}, we plot the variations of $F_1$ and $J$ along the iterations for different values of $\varepsilon$.

In Table \ref{t:MNPCcl}, we provide the proportion of correctly classified samples for each of the three classes, named Iris1, Iris2, and Iris3, in the training set. It can be observed that, even for relatively large values of $\varepsilon$ the classification performance is satisfactory.

\begin{table}[htb]
\begin{center}
\caption{Classification performance.}
\label{t:MNPCcl}
\begin{tabular}{ |c|c|c|c| } 
\hline
$\varepsilon$ & Iris1 &  Iris2 & Iris3 \\
\hline
0.5 & 50/50 & 41/50& 29/50\\
0.2 & 50/50 & 41/50& 30/50 \\
0.05 & 50/50 & 41/50& 30/50\\
\hline
\end{tabular}
\end{center}
\end{table}

Other choices for the activation functions (i.e.,  $\psi(\xi)\neq (1+e^\xi)^{-1}$) and the values of $(\lambda,r_2,r_3)$ might yield better classification scores than those shown on Table \ref{t:MNPCcl}.

\begin{figure}[ht]
   \centering
   \begin{tabular}{cc}
     \includegraphics[width=0.45\textwidth]{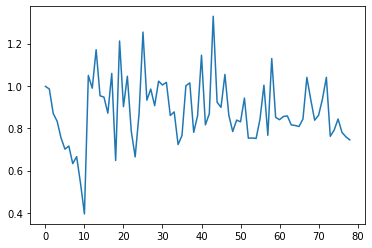}
        &\includegraphics[width=0.45\textwidth]{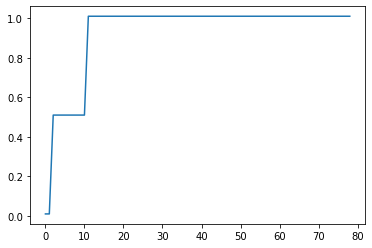}\\
        (a) & (b)\\
    \includegraphics[width=0.45\textwidth]{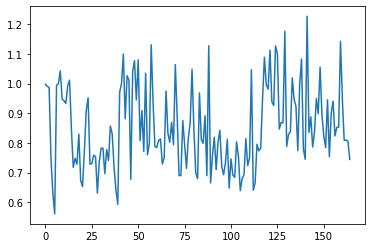}
        &\includegraphics[width=0.45\textwidth]{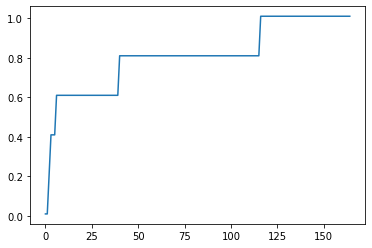}\\
        (c) & (d)\\
        \includegraphics[width=0.45\textwidth]{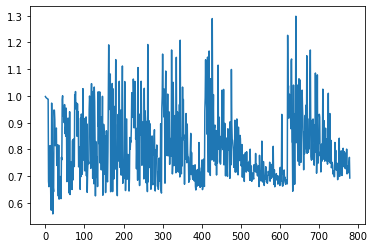}
        &\includegraphics[width=0.45\textwidth]{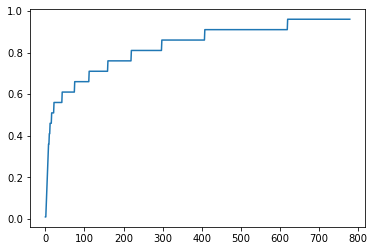}\\
        (e) & (f)\\
\end{tabular}
\caption{Variations of the objective functions $F_1$ (left column) and $J$ (right column) 
with respect to the number of iterations for 
$\varepsilon=0.5$ (a-b), 0.2 (c-d) and 0.05 (e-f).
} 
       \label{fig:plot1}
\end{figure}

\section{Conclusions}
As demonstrated by Theorem \ref{th: minnoncon}, Problem \eqref{problem} covers a broad spectrum of optimization scenarios. As the main outcome of Theorem \ref{th: convergence},  Cutting sphere algorithm \ref{alg: alg1} appears as the first outer approximation algorithm that guarantees convergence to a global solution to \eqref{problem}. To enhance the practicality of this algorithm, variants have been introduced in Algorithms \ref{alg: warm_rest} and \ref{alg: inexact}. Notably, implementations of the latter have shown its effectiveness
on two examples.

Future research could explore extending our convergence proofs to more general spaces and developing new techniques for constraint aggregation, aiming to further improve the performance of the proposed algorithms.

\small{\subsection*{Funding}{This work was funded by the European Union's Horizon 2020 research and innovation program under the Marie Sk{\l}odowska-Curie grant agreement No 861137. This work represents only the authors' view, and the European Commission is not responsible for any use that may be made of the information it contains.}}

\bibliographystyle{plain}
\bibliography{CopyWeakConv.bib}

\end{document}